\documentclass[sn-vancouver,Numbered]{sn-jnl}
\usepackage{graphicx}%
\usepackage{multirow}%
\usepackage{amsmath,amssymb,amsfonts}%
\usepackage{amsthm}%
\usepackage{mathrsfs}%
\usepackage[title]{appendix}%
\usepackage{xcolor}%
\usepackage{textcomp}%
\usepackage{manyfoot}%
\usepackage{booktabs}%
\usepackage{algorithm}%
\usepackage{algorithmicx}%
\usepackage{algpseudocode}%
\usepackage{listings}%
\usepackage{bbm}
\usepackage{subfig}

\newcommand{\R}{\mathbb{R}}
\newcommand{\N}{\mathbf{N}}

\newcommand{\V}{\mathcal{V}}
\newtheorem{theorem}{Theorem}
\newtheorem{lemma}{Lemma}
\newtheorem{corollary}{Corollary}
\newtheorem{remark}{Remark}

\newtheorem{hypo}{Hypothesis}

\raggedbottom
\begin{document}
\title[Stability analysis]{Stability analysis of an inverse coefficients problem in a  system of partial differential equations}
\author*[1]{\fnm{Houcine} \sur{Meftahi}}\email{houcine.meftahi@enit.utm.tn}
\equalcont{These authors contributed equally to this work.}
\author[2]{\fnm{Chayma} \sur{Nssibi}}\email{chayma.nssibi@enit.utm.tn}
\equalcont{These authors contributed equally to this work.}
\affil[1]{\orgdiv{University of Jendouba/ISIK and ENIT-Lamsin,Tunisia}}
\affil[2]{\orgdiv{University of Tunis Elmanar, ENIT-LAMSIN, B.P. 37, 1002 Tunis}}

\abstract{
In this study, we address the inverse problem of recovering the Lamé parameters 
($\lambda, \mu$) and the density $\rho$ of a medium from the Neumann-to-Dirichlet map   for any dimension $d\geq 2$. This inverse problem finds its motivation in the reconstruction of mechanical properties of tissues in  medical diagnostics. We first assume that the Lamé parameters ($\lambda, \mu$) are know and we look for the inverse problem of recovering the density $\rho$. In this context, we derive a constrcutive Lipschitz stability estimate in terms of the  Neumann to Dirichlet map in the case of piecewise constant parameters. Then, we look for the inverse problem of recovering $\lambda$, $\mu$ and $\rho$ simultameousely.
 We establish Lipschitz stability  estimate, provided that the parameters $\lambda$, 
$\mu$ and $\rho$ have upper and lower bounds and belong to a known finite-dimensional subspace.
The proofs hinge on monotonicity relations between the parameters and the Neumann-to-Dirichlet operator, coupled with the techniques of localized potentials.}

\keywords{Linear elasticity; Inverse problems; Monotonicity; Localized potentials; Stability.}
\maketitle
\section{Introduction}
In this article, we explore the inverse problem of reconstructing the spatially varying parameters $\lambda(x)$, $\mu(x)$, and $\rho(x)$ in the context of the following elliptic system:
\begin{equation}\label{prob1}
-\nabla\cdot \left(\lambda (\nabla\cdot u) I_d+2\mu\nabla^s u\right)+ \rho  u = 0 \text{ in } \Omega,
\end{equation}
where, $\nabla^s u$ denotes a symmetric matrix associated with the displacement field $u$, defined as $\left(\nabla u+\nabla u^T\right)/2$. Additionally, $I_d$ represents the identity matrix of size $d\times d$. The coefficients $\lambda$, $\mu$,  $\rho$  are distinctly referred to as the compression modulus, shear modulus,  and density modulus, respectively.

Equation \eqref{prob1} can be interpreted  as  an approximate model for small deformations in biological tissues, where the term 
$\rho u$ represents a body force proportional to the displacement,  known as the restoration force  \cite{villa2021mechanical}.

 The primary impetus behind tackling this inverse problem lies in its applications to non-destructive testing of elastic structures, such as those encountered in geophysics \cite{aki1980quantitative} and medical imaging \cite{gennisson2013ultrasound, parker2010imaging, doyley2012model}. This is particularly significant in the context of tumor detection \cite{yuan2014application}.

This inverse problem exhibits similarities to the inverse conductivity problem \cite{alessandrini1988stable, harrach2019uniqueness, harrach2019global} and is recognized for its severe ill-posedness, where small measurement errors can result in significantly different outcomes. Unfortunately, the mathematical techniques applied to study the conductivity inverse problem cannot be directly extended to the present problem. This limitation stems from dealing with an elliptic system featuring multiple parameters as functions of space, rather than a scalar equation with a single parameter. Consequently, due to these inherent difficulties, there are only a limited number of known results regarding uniqueness and stability.

Let's explore some relevant findings related to our problem. The task of determining an unknown pair $\gamma = (\lambda, \mu)$ of Lamé parameters in the context of the isotropic linear elasticity system,
was initially addressed by Ikehata \cite{ikehata1990inversion}. In this study, the author established a uniqueness result for the case of constant $\gamma$. The proof relies on the first-order approximation of the Dirichlet-to-Neumann map $\Lambda(\gamma+h)$ at $\gamma$.

In the two-dimensional scenario, Imanuvilov and Yamamoto \cite{imanuvilov2015global} demonstrated a uniqueness result for $C^{10}$ Lamé parameters. For three dimensions, Nakamura and Uhlmann \cite{nakamura1994global} established uniqueness assuming that the Lamé parameters are $C^\infty$ and that $\mu$ is close to a positive constant.

Beretta et al. \cite{Bretta} contributed a uniqueness and Lipschitz stability result, assuming that the Lamé parameters are piecewise constant and the interfaces of discontinuity contain flat parts. Further results on uniqueness and stability can be found in \cite{akamatsu1991identification,nakamura1995inverse,eskin2002inverse,nakamura1993identification,nakamura2003global}.

The proof of uniqueness and stability in the aforementioned inverse coefficient problems typically relies on intricate methodologies, involving a combination of singular solutions and unique continuation estimates. Further insights into these techniques can be found in  \cite{beretta2011lipschitz, alessandrini2005lipschitz, gaburro2015lipschitz, alessandrini2018lipschitz, beretta2013lipschitz, alberti2019calderon}, addressing related problems. For the numerical inversion methods we refer the reader to \cite{ammari2021direct, ammari2015mathematical,ammari2008method} and the references therin.

In contrast, for our specific problem, we employ simpler techniques based  in a combination of monotonicity properties and the existence of localized potentials.

The principle challenge in our inverse coefficients problem stems from its multiparametric nature. In contrast, for the inverse optical tomography problem,
 characterized by the scalar equation
\[
-\nabla\cdot (\sigma\nabla u)+ q u= 0,
\]  
Arridge and Lionheart \cite{arridge1998nonuniqueness} have shown a non-uniqueness result regarding the simultaneous recovery of a general $\sigma$ and $q$. However, in cases where the diffusion coefficient $\sigma$ is piecewise constant and the absorption term $q$ is piecewise analytic, uniqueness and Lipschitz stability results have been established in  \cite{harrach2012simultaneous,meftahi2020uniqueness}.

In this study, we focus on the elliptic system and establish Lipschitz stability estimates. It's important to note that for arbitrary Lamé parameters $(\lambda, \mu)$ and density $\rho$, Lipschitz stability is not guaranteed \cite{mandache2001exponential}. However, by considering parameters within a specific finite-dimensional subspace, we demonstrate Lipschitz stability estimates using the tools of monotonicity and localized potentials. In particular we derived a contructive Lipschitz stability estimate for the inverse problem of recovering the density $\rho$  in terms of the  Neumann to Dirichlet map, in the case of piecewise constant parameters.

It's worth highlighting that the result obtained here does not constitute a direct extension of the findings in \cite{meftahi2020uniqueness,meftahi2023elastic,eberle2021lipschitz} because it requires the verification of essential, non-trivial assumptions to guarantee the effective application of localized potentials.

The concept of employing monotonicity properties and localized potentials techniques has yielded numerous results in various inverse coefficients problems. For a comprehensive overview, one can refer to  \cite{harrach2012simultaneous,harrach2018localizing,harrach2010exact,harrach2017local,harrach2019global, eberle2020lipschitz} and the  references  therein.

In our paper, we showcase the applicability of this approach to establish novel Lipschitz stability results for the inverse problem of recovering the Lamè parameters  $\lambda,\mu$ and the density $\rho$, either simultaneously or separately.  Note that in the case of time harmonic regime  for the  wave equation a conditional Lipshitz stability result is derived  in \cite{beretta2017uniqueness}.
The authors  consider only the three dinesional case and assume that the paremeters  $\lambda,\mu$, $\rho$ are piecewise constants on a given partition.

Our results are distinguished by the fact that they require fewer restrictions, fitting any dimension $d\geq 2$ and partial  boundary data.
 These Lipschitz stability results, to the best of our knowledge, represent new contributions to the problem under consideration.

It is crucial to underscore that Lipschitz stability plays a vital role in the numerical reconstruction of unknown parameters, enabling the analysis of convergence rates in iterative algorithms \cite{de2012local,maarten2015analysis}. Additionally, it's important to note that, in practical scenarios, we only have access to a finite-dimensional approximation of the Neumann-to-Dirichlet operator. In light of the preprint by Alberti and Santacesaria \cite{alberti2019infinite}, our results on Lipschitz stability could prove beneficial in addressing the finite-dimensional approximation of the Neumann-to-Dirichlet operator.

The paper unfolds as follows. In Section 2, we introduce the forward model, the Neumann-to-Dirichlet operator, and present the inverse problem. The main theoretical tools for this work are provided in Sections 3 and 4.
Section 3 focuses on reconstructing the density profile $\rho$ under the assumption that the  Lamé parameters  $\lambda,\mu$ are known. We establish a monotonicity relation between $\rho$ and the Neumann-to-Dirichlet operator $\Lambda_\rho$. The section also includes the proof of the existence of localized potentials, leading to the derivation of non constructive and constructive Lipschitz stability results.
In Section 4, we address the inverse problem of simultaneously recovering the Lamé parameters $\lambda,\mu$ and the density $\rho$ from the Neumann-to-Dirichlet map $\Lambda_{\lambda,\mu,\rho}$. We establish a monotonicity relation between the coefficients $(\lambda,\mu,\rho)$ and the Neumann-to-Dirichlet operator $\Lambda_{\lambda,\mu,\rho}$. The section further includes the proof of the existence of localized potentials and the Lipschitz stability result.
\section{Problem formulation}
Let $\Omega \subset \mathbb{R}^d$ ($d\geq 2$),  be a bounded  domain with  $C^{1,1}$  boundary $\partial \Omega=\overline{\Gamma_N\cup\Gamma_D}$,
with  $\Gamma_N\cap\Gamma_D  = \emptyset$. For the following, we define 
\[
L^\infty_+(\Omega):=\{v\in L^\infty(\Omega): \text{ess\,inf}_{x\in\Omega}v(x)>0\}.
\] 
The divergence of a matrix $M\in \R^{d\times d}$ is defined by
$\nabla\cdot M=\sum_{i,j}^d\frac{\partial M_{i,j} }{\partial x_j}e_i$, where $e_i$  is a unit vector and $x_j$ is a component of a vector from $\R^d$. The Frobenius product of two matrix  $M, M^{\prime}\in \R^{d\times d}$ is defined as $M:M^{\prime}=\sum_{i,j}^dM_{i,j}M^{\prime}_{i,j}$.

Let $u:\Omega \rightarrow \R^d$ the dispslacement vector. For 
 $\lambda,\mu, \rho \in L^\infty_+(\Omega)$, we consider the following  problem  with Neumann boundary data  $g\in L^{2}(\Gamma_N,\mathbb{R}^d)$:
\begin{equation}
\label{direct}
\left\{
\begin{aligned}
 -\nabla\cdot\left(\lambda (\nabla\cdot u) I+2\mu\nabla^s u\right) +\rho u &= 0\quad \textrm{ in }  \Omega,\\
(\lambda(\nabla\cdot u) I+2\mu \nabla^s u)\nu &  = g\quad \textrm{ on  }\Gamma_N,\\
  u&=0 \quad  \textrm{ on  }  \Gamma_D,
 \end{aligned}\right.
\end{equation}
where $\nu$ denotes the unit outer normal   vector  at the boundary $\partial\Omega$,
$I$ is $d\times d$-identity matrix  and $\nabla^s u$ is a symetric matrix defined by 
$\nabla^s u=( \nabla u+(\nabla u)^T)/2$. 

 The  weak formulation of  problem (\ref{direct})  reads
\begin{equation}
\label{eqv1}
\int_\Omega \lambda \nabla\cdot u \nabla\cdot w+ 2\mu \nabla^s u:\nabla^s w\,dx +\int_\Omega \rho u\cdot w\,dx=\int_{\Gamma_N}g\cdot w\,ds \quad  \textrm{ for all } w\in \mathcal{V},
\end{equation}
where
\[
\mathcal{V}:=  \left\{ v\in  H^1(\Omega,\mathbb{R}^d):  \quad v|_{\Gamma_D}= 0  \right\}.
\]
\begin{hypo}
Assume that  $\mu(x)\in C^{0,1}(\Omega)$
and $\lambda(x), \rho(x)\in L^{\infty}_+(\Omega)$ satisfy
\[
\begin{aligned}
&\mu(x)\geq \delta_0, \quad \lambda(x)+2 \mu(x)\geq \delta_0\quad \mathrm{a.\,e.}\,\,x\in{ \Omega},\\
&\Vert \mu\Vert_{C^{0,1}({\Omega})}+\Vert\lambda\Vert_{L^{\infty}(\Omega)}\leq M_0, \quad  \Vert\rho\Vert_{L^{\infty}(\Omega)} \leq M_0,
\end{aligned}
\]
\noindent
with positive constants $\delta_0$, $M_0$, where we define $\Vert f\Vert_{C^{0,1}(\Omega)}=\Vert f\Vert_{L^{\infty}({ \Omega})}+\Vert\nabla f\Vert_{L^{\infty}(\Omega)}$.
\end{hypo}
  For $\lambda,\mu, \rho$ satisfaying   Hypothesis 1 and $g\in L^2(\Gamma_N,\mathbb{R}^d)$,  the existence and uniqueness of a solution  to the weak formulaton (\ref{eqv1}) can be shown by the Lax-Milgram theorem.
 Note that the parameters $\lambda,\mu, \rho$ must satisfy Hypothesis 1  in order to use  the unique continuation result from \cite{lin2010quantitative}.

Measuring boundary displacements $u_{\vert_{\Gamma_N}}$ that result from applying forces  $g$ to $\Gamma_N$ can be modeled by the  Neumann-to-Dirichlet  operator :
\[
 \begin{aligned}
 \Lambda_{\lambda,\mu, \rho} : L^{2}(\Gamma_N,\mathbb{R}^d) &\longrightarrow   L^{2}(\Gamma_N,\mathbb{R}^d) \\
                                & g \longmapsto u_{|\Gamma_N},
  \end{aligned}
 \]
 where $u$  is the unique solution   of \eqref{eqv1}.
 The  inverse problem we  consider here,  is the following:
\begin{equation}
\label{invp}
\textrm{ \it  Find the parameters } (\lambda,\mu,\rho) \textrm{  \it  from  the knowledge of the  map } 
 \Lambda_{\lambda,\mu, \rho}.  
 \end{equation}
 \section{Recovery of  the  density  $\rho$}
 In this section,  we assume  that the  Lamé coefficients   $\lambda, \mu$ are  known and we are intereted  to recover the density   $\rho$ from the Neumann-to-Dirichlet operator  $ \Lambda_{\lambda,\mu, \rho}$.  For simplicity,  we will denote    $ \Lambda_{\lambda,\mu, \rho}$ by   $ \Lambda_{\rho}$. 
 
The  Lipschitz stability  estimate is proven by  the monotonicity relation between the  density    $\rho$ and the Neumann-to-Dirichlet-operator $\Lambda_{\rho}$, and  the existence of localized potentials.  
 \subsection{Monotonicity, localized potentials  and Lipschitz stability}\label{sec3}
In the following subsection, we establish the existence of localized potentials and derive a Lipschitz stability estimate without assuming definiteness  on the parameter.
\subsubsection{Monotonicity}
In the following lemma, we show a monotonicity relation between the Neumann-to-Dirichlet operator and the density $\rho$. 
\begin{lemma}[Monotonicity relation]
\label{mono}
Let $\rho_1, \rho_2 \in L^\infty_+(\Omega)$  and let  $g\in L^2(\Gamma_{\textup N},\mathbb{R}^d)$  be an applied boundary load. The corresponding solutions of (\ref{direct}) are denoted by $u_1:=u^g_{\rho_1},\ u_2:=u^{g}_{\rho_2}\in \mathcal{V}$. Then we have
\begin{equation}
\label{eqmono}
\int_\Omega(\rho_1-\rho_2)\Vert u_2\Vert^2\,dx
\geq \langle g,\Lambda_{\rho_2}(g)\rangle-\langle g,\Lambda_{\rho_1}(g)\rangle
\geq \int_\Omega(\rho_1-\rho_2)\Vert u_1\Vert^2\,dx.
\end{equation}
\end{lemma}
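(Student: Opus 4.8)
The plan is to exploit the symmetric, coercive bilinear form underlying the weak formulation \eqref{eqv1}. For $\rho\in L^\infty_+(\Omega)$ I write
\[
B_\rho(u,w) := \int_\Omega \lambda\, (\nabla\cdot u)(\nabla\cdot w) + 2\mu\, \nabla^s u : \nabla^s w\,dx + \int_\Omega \rho\, u\cdot w\,dx,
\]
so that $u_i := u^g_{\rho_i}$ is characterized by $B_{\rho_i}(u_i, w) = \int_{\Gamma_N} g\cdot w\,ds$ for all $w\in\mathcal{V}$. Taking $w = u_i$ gives the energy identity $\langle g, \Lambda_{\rho_i}(g)\rangle = B_{\rho_i}(u_i, u_i)$, while testing the equation for $u_1$ against $u_2$ (and vice versa) yields the reciprocity relations $B_{\rho_1}(u_1, u_2) = \langle g, \Lambda_{\rho_2}(g)\rangle$ and $B_{\rho_2}(u_1, u_2) = B_{\rho_2}(u_2, u_1) = \langle g, \Lambda_{\rho_1}(g)\rangle$, the latter by symmetry of $B_{\rho_2}$. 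These identities are what turn the desired inequality into an exact algebraic one.

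First I would prove the right-hand inequality. Since the elastic part of $B_\rho$ does not depend on $\rho$, I have
\[
B_{\rho_2}(u_1, u_1) = B_{\rho_1}(u_1, u_1) + \int_\Omega (\rho_2 - \rho_1)\Vert u_1\Vert^2\,dx = \langle g, \Lambda_{\rho_1}(g)\rangle + \int_\Omega (\rho_2 - \rho_1)\Vert u_1\Vert^2\,dx.
\]
Expanding the square $B_{\rho_2}(u_1 - u_2, u_1 - u_2) = B_{\rho_2}(u_1,u_1) - 2 B_{\rho_2}(u_1,u_2) + B_{\rho_2}(u_2,u_2)$ and substituting $B_{\rho_2}(u_2,u_2) = \langle g, \Lambda_{\rho_2}(g)\rangle$, $B_{\rho_2}(u_1,u_2) = \langle g, \Lambda_{\rho_1}(g)\rangle$, and the line above, the cross terms telescope and I obtain
\[
B_{\rho_2}(u_1-u_2, u_1-u_2) = \langle g, \Lambda_{\rho_2}(g)\rangle - \langle g, \Lambda_{\rho_1}(g)\rangle + \int_\Omega (\rho_2-\rho_1)\Vert u_1\Vert^2\,dx.
\]
The left-hand side is nonnegative because $B_{\rho_2}$ is positive — coercivity follows from $\mu\geq\delta_0$ and $\lambda+2\mu\geq\delta_0$ via Korn's inequality together with $\rho_2>0$ — which rearranges exactly to the claimed lower bound $\langle g, \Lambda_{\rho_2}(g)\rangle - \langle g, \Lambda_{\rho_1}(g)\rangle \geq \int_\Omega (\rho_1-\rho_2)\Vert u_1\Vert^2\,dx$.

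The upper bound follows by the same computation with the roles of $\rho_1$ and $\rho_2$ interchanged, i.e. expanding $B_{\rho_1}(u_1-u_2, u_1-u_2)\geq 0$ and using $B_{\rho_1}(u_1,u_1) = \langle g, \Lambda_{\rho_1}(g)\rangle$, $B_{\rho_1}(u_1,u_2) = \langle g, \Lambda_{\rho_2}(g)\rangle$, and $B_{\rho_1}(u_2,u_2) = \langle g, \Lambda_{\rho_2}(g)\rangle + \int_\Omega(\rho_1-\rho_2)\Vert u_2\Vert^2\,dx$. I expect no serious obstacle here: the argument is purely algebraic once the symmetric bilinear form and its reciprocity identities are in place, and the only analytic input is the positivity of $B_{\rho_i}$, already guaranteed by Hypothesis 1 and Korn's inequality (the same coercivity that secures Lax--Milgram). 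The one point requiring care is bookkeeping — correctly matching each cross term $B_{\rho_i}(u_1,u_2)$ to the right Neumann-to-Dirichlet pairing — rather than any genuine difficulty.
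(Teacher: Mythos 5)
Your proposal is correct and follows essentially the same route as the paper: both arguments expand the nonnegative quantity $B_{\rho_i}(u_1-u_2,u_1-u_2)$, use the energy and reciprocity identities obtained by testing each weak formulation against the other solution, and interchange the roles of $\rho_1$ and $\rho_2$ for the second inequality. Your abstract bilinear-form bookkeeping is just a cleaner rewriting of the paper's explicit integral computation (and note that only pointwise positivity of $\lambda,\mu,\rho_i$ is needed for the nonnegativity step, not full Korn coercivity).
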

\begin{proof}
Since $\Lambda_{\rho_2}(g)=u_2|_{\Gamma_N}$, we  use the variational  formulation \eqref{eqv1} for $\rho_1$ and $\rho_2$ with
$w:=u_2$ and obtain 
\[
\begin{aligned}
&\int_\Omega \lambda \nabla\cdot u_1\nabla\cdot u_2\,dx+
2\int_\Omega\mu\nabla^s u_1:\nabla^s u_2\,dx+ \int_\Omega\rho_1 u_1\cdot u_2\,dx\\
&=\langle g,\Lambda_{\rho_2}(g) \rangle\\
&=  \int_\Omega \lambda \nabla\cdot u_2\nabla\cdot u_2\,dx   +  \int_\Omega\mu\nabla^s u_2:\nabla^s u_2\,dx+ \int_\Omega\rho_2 \Vert u_2\Vert^2\,dx.
\end{aligned}
\]
Therefore 
\[
\begin{aligned}
&\int_\Omega \lambda \nabla\cdot (u_1-u_2)\nabla\cdot (u_1-u_2)\,dx+
2\int_\Omega\mu\nabla^s (u_1-u_2):\nabla^s(u_1-u_2)\,dx\\
&+ \int_\Omega\rho_1\Vert u_1-u_2\Vert^2\,dx \\
& =\int_\Omega \lambda \nabla\cdot u_1\nabla\cdot u_1\,dx+
\int_\Omega \lambda \nabla\cdot u_2\nabla\cdot u_2\,dx
-2\int_\Omega \lambda \nabla\cdot u_1\nabla\cdot u_2\,dx\\
&+2\int_\Omega\mu\nabla^s u_1:\nabla^s u_1\,dx
+2\int_\Omega\mu\nabla^s u_2:\nabla^s u_2\,dx
-4\int_\Omega\mu\nabla^s u_1:\nabla^s u_2\,dx\\
&+  \int_\Omega \rho_1\Vert u_1 \Vert^2\,dx +    \int_\Omega \rho_1\Vert u_2 \Vert^2\,dx -2  \int_\Omega \rho_1 u_1\cdot u_2\,dx\\
&=\langle g,\Lambda_{\rho_1}(g)\rangle-\langle g,\Lambda_{\rho_2}(g)\rangle
+\int_\Omega(\rho_1-\rho_2)\Vert u_2\Vert^2\,dx.
\end{aligned}
\]
We observe that  the left-hand side  of the above equality is nonnegative, thus  the first  inequality  in \eqref{eqmono} follows. 
To obtain the second inequality, we simply interchange the roles of
 $\rho_1$ and $\rho_2$.
\end{proof}
\noindent
 According to  the previous lemma, we have  the following monotonicity property.
\begin{corollary}[Monotonicity]\label{monotonicity}
For $\rho_1, \rho_2 \in L^\infty_+(\Omega)$, we have
\begin{equation}\label{monotonicity_corol}
\ \rho_1\leq \rho_2 \quad \text{  implies } \quad
\langle \Lambda_{\rho_1}(g), g\rangle\geq \langle \Lambda_{\rho_2}(g),g\rangle,
\quad \text{ for all } g\in L^2(\Gamma_N,\R^d).
\end{equation}
\end{corollary}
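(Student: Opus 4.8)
The plan is to deduce the corollary immediately from the two-sided monotonicity estimate \eqref{eqmono} of Lemma \ref{mono}, exploiting the fact that the hypothesis $\rho_1 \le \rho_2$ renders one of the integral bounds sign-definite. First I would fix an arbitrary boundary load $g \in L^2(\Gamma_N,\mathbb{R}^d)$ and let $u_1 = u^g_{\rho_1}$ and $u_2 = u^g_{\rho_2}$ be the corresponding solutions, exactly as set up in the lemma.

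Next I would record the pointwise sign of the relevant integrand. Since $\rho_1 \le \rho_2$ almost everywhere in $\Omega$ we have $\rho_1 - \rho_2 \le 0$ a.e., while $\Vert u_2 \Vert^2 \ge 0$ everywhere; hence $(\rho_1 - \rho_2)\Vert u_2\Vert^2 \le 0$ a.e. and therefore
\[
\int_\Omega (\rho_1 - \rho_2)\Vert u_2 \Vert^2\,dx \le 0.
\]
The decisive point is to invoke the \emph{left} inequality of \eqref{eqmono}, the one whose upper bound involves $u_2$, namely
\[
\langle g, \Lambda_{\rho_2}(g)\rangle - \langle g, \Lambda_{\rho_1}(g)\rangle \le \int_\Omega (\rho_1 - \rho_2)\Vert u_2 \Vert^2\,dx \le 0.
\]
Combining the two displays yields $\langle g, \Lambda_{\rho_2}(g)\rangle \le \langle g, \Lambda_{\rho_1}(g)\rangle$, and since the $L^2(\Gamma_N,\mathbb{R}^d)$ inner product is symmetric this is precisely $\langle \Lambda_{\rho_1}(g), g\rangle \ge \langle \Lambda_{\rho_2}(g), g\rangle$. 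As $g$ was arbitrary, the claimed implication \eqref{monotonicity_corol} follows.

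There is essentially no genuine obstacle here: the statement is a one-line consequence of the lemma, and the only point requiring care is to select the correct branch of the sandwich inequality. Using instead the right-hand bound (the one featuring $u_1$) would produce a \emph{lower} bound on the difference of quadratic forms by a quantity that is itself nonpositive under the hypothesis, which is vacuous; it is the left-hand bound that supplies the sign-definite \emph{upper} bound and hence the needed conclusion.
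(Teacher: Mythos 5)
Your argument is correct and is exactly the intended deduction: the paper states the corollary as an immediate consequence of Lemma \ref{mono} without writing out the details, and your use of the left-hand (upper-bound) inequality of \eqref{eqmono} together with the sign of $(\rho_1-\rho_2)\Vert u_2\Vert^2$ is the right one-line argument. Your remark that the $u_1$-branch of the sandwich would be vacuous here is also accurate.
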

\subsubsection{Runge approximation and localized potentials}
In this subsection, we present a Runge approximation result that will be instrumental in establishing the existence of localized potentials.
\begin{lemma}[Runge approximation]
\label{thm:runge}
Let  $\lambda, \mu, \rho$  satisfying Hypothesis 1 and 
$D$ be subset of $\Omega$ with positive measure.  Denote $S_D$ the subset of $L^2(D)$ defined by 
\begin{equation}\label{set_sol}
S_D:=\{v\in L^2(D,\R^d):  -\nabla\cdot(\lambda \nabla\cdot v+2\mu \nabla^s v) +\rho v    =0 \text{ in } D\}.
\end{equation}
Then,  for all $f\in S_D$,  there exists a sequence $(g_n)_{n\in\mathbb N}\subset L^2(\Gamma_N,\R^d)$ such that the corresponding solutions $u^{(g_n)}$ of problem \ref{direct}   with  a boundary data $g_n$, $n\in \N$,  
fulfill
\[
u^{(g_n)}|_{D}\to f \quad \textrm{ in } L^2(D,\R^d).
\]
\end{lemma}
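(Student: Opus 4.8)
The plan is to prove this by a duality (Hahn--Banach) argument, with the unique continuation principle of \cite{lin2010quantitative} doing the essential work. Consider the bounded linear \emph{measurement-to-interior} operator
\[
\Phi: L^2(\Gamma_N,\R^d)\to L^2(D,\R^d),\qquad \Phi g:=u^{(g)}|_D,
\]
where $u^{(g)}$ solves \eqref{eqv1}. Since $S_D$ is a closed subspace of $L^2(D,\R^d)$, it suffices to show that $\overline{\operatorname{ran}\Phi}\supseteq S_D$, and by Hahn--Banach this reduces to proving that every $\phi\in L^2(D,\R^d)$ which annihilates $\operatorname{ran}\Phi$ (i.e.\ $\int_D\phi\cdot u^{(g)}\,dx=0$ for all $g$) also annihilates $S_D$ (i.e.\ $\int_D\phi\cdot f\,dx=0$ for all $f\in S_D$).

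So I would fix such a $\phi$, extend it by zero to $\Omega$, and introduce the auxiliary field $w\in\mathcal{V}$ defined as the unique solution of
\[
\int_\Omega \lambda\,\nabla\cdot w\,\nabla\cdot v+2\mu\,\nabla^s w:\nabla^s v+\rho\, w\cdot v\,dx=\int_D\phi\cdot v\,dx\qquad\text{for all }v\in\mathcal{V},
\]
whose existence follows from Lax--Milgram exactly as for \eqref{eqv1}; note that $w$ carries homogeneous Neumann data on $\Gamma_N$ and vanishes on $\Gamma_D$. The bilinear form is symmetric, so testing this identity against $v=u^{(g)}$ and comparing with \eqref{eqv1} tested against $w\in\mathcal{V}$ gives
\[
\int_{\Gamma_N} g\cdot w|_{\Gamma_N}\,ds=\int_D\phi\cdot u^{(g)}\,dx=0\qquad\text{for all }g\in L^2(\Gamma_N,\R^d),
\]
whence $w|_{\Gamma_N}=0$. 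Thus $w$ has simultaneously vanishing Dirichlet and conormal traces on the open piece $\Gamma_N$, and it solves the homogeneous system in $\Omega\setminus\overline D$ since $\phi$ is supported in $D$.

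At this point I would invoke the unique continuation result of \cite{lin2010quantitative}, whose hypotheses are guaranteed precisely by Hypothesis 1 ($\mu\in C^{0,1}$, $\lambda,\rho\in L^\infty_+$, $\mu\geq\delta_0$, $\lambda+2\mu\geq\delta_0$): vanishing Cauchy data on $\Gamma_N$ force $w\equiv0$ throughout the connected component of $\Omega\setminus\overline D$ abutting $\Gamma_N$. Hence both $w$ and its conormal derivative vanish on $\partial D$ from the exterior, and since the coefficients do not jump across $\partial D$ the interior traces vanish as well. To transfer this to $S_D$, take $f\in S_D$ and apply Green's formula for the Lamé operator twice, using the defining equation of $w$ inside $D$:
\[
\int_D\phi\cdot f\,dx=\int_{\partial D}\!\big[(\lambda(\nabla\cdot f)I+2\mu\nabla^s f)\nu\big]\cdot w\,ds-\int_{\partial D}\!\big[(\lambda(\nabla\cdot w)I+2\mu\nabla^s w)\nu\big]\cdot f\,ds,
\]
where the interior bulk term drops out because $f$ solves the homogeneous system in $D$. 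Both boundary integrals vanish by the previous step, so $\int_D\phi\cdot f\,dx=0$ for every $f\in S_D$, which is exactly what the duality argument required.

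The main obstacle I anticipate is the unique continuation step: unlike the scalar conductivity case, one needs continuation from Cauchy data for the \emph{elliptic Lamé system}, which is why Hypothesis 1 imposes Lipschitz regularity on $\mu$ together with the ellipticity bounds on $\mu$ and $\lambda+2\mu$ — these are exactly the structural conditions under which \cite{lin2010quantitative} applies. A secondary technical point is the implicit geometric assumption that $\Omega\setminus\overline D$ is connected and meets $\Gamma_N$, so that $w\equiv0$ propagates all the way to $\partial D$; if $D$ disconnects the complement one must argue component by component. The remaining ingredients — Lax--Milgram for $w$, symmetry of the form, and the two integrations by parts — are routine.
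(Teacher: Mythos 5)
Your proposal is correct and follows essentially the same route as the paper: the paper phrases the duality via the operator $A:f\mapsto v|_{\Gamma_N}$ and shows its adjoint $A^*g=u^{(g)}|_D$ has dense range by proving $A$ injective, which is precisely your Hahn--Banach annihilator argument with $w$ playing the role of the paper's auxiliary solution $v$; in both cases the key step is unique continuation from vanishing Cauchy data on $\Gamma_N$, followed by an integration by parts against elements of $S_D$. The only difference is presentational (annihilator of the range versus density of the adjoint's range, and Green's formula versus testing the weak form), and you correctly identify the implicit geometric hypothesis that $\Omega\setminus\overline{D}$ be connected, which the paper also uses without stating it in the lemma.
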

\begin{proof}
Denoting by  $X$ the closure of $S_D$ in $L^2(D,\R^d)$, wich is a  Hilbert space.  We introduce the operator
\begin{equation}\label{operA}
A:   X\to  L^2(\Gamma_N,\R^d), \quad f\mapsto A f:= v_{|\Gamma_N}, 
\end{equation}
where $v\in H^1(\Omega, \R^d)$  solves 
\begin{equation}\label{eq:runge_v}
\int_\Omega \lambda\nabla\cdot v\nabla\cdot w + 2\mu \nabla^s v: \nabla^s w\,dx+ \int_\Omega\rho  v\cdot w\,dx=\int_{D}f \cdot w\,ds, \quad \textrm{ for all } w\in 
\mathcal V.
\end{equation}
Let  $g\in L^2(\Gamma_N,\R^d)$ and $u\in H^1(\Omega, \R^d)$ be the corresponding solution  of the problem \eqref{direct} with Neumann boundary data $g$.  Then the adjoint operator of  $A$ is characterized by 
\begin{equation}
\begin{aligned}
 \int_{D} \left( A^* g \right) \cdot f \,dx& = \int_{\Gamma_N} \left( A f \right)\cdot g \,ds = \int_{\Gamma_N} v\cdot g \,ds \\
&=\int_\Omega \lambda\nabla\cdot u\nabla\cdot v + 2\mu \nabla^s u: \nabla^s v\,dx+ \int_\Omega\rho  u\cdot v\,dx\\
&=\int_{D}f\cdot u\,dx, \quad \textrm{ for all } f\in  X,
\end{aligned}
\end{equation}
which shows that $A^*: L^2(\Gamma_N,\R^d)\to  X$ fulfills 
$A^* g=u|_{D}$,  where $u$  is a solution  to  problem \eqref{direct}  with Neumann boundary data $g$.
 The assertion follows if we can show that $A^*$ has dense range, which is equivalent
to $A$ being injective.

To prove this, let $v|_{\Gamma_N}=Af=0$ with $v\in H^1(\Omega,\R^d)$ solving (\ref{eq:runge_v}).
Since (\ref{eq:runge_v}) also implies that $(\lambda(\nabla\cdot v) I+2\mu \nabla^s v)\nu|_{\Gamma_N}=0$, and $\Omega\setminus D$ is connected, it follows by unique continuation that $v|_{\Omega\setminus D}=0$ and thus $v^+|_{\partial D}=0$. Since $v\in H^1(\Omega,\R^d)$ this also implies that
$v^-|_{\partial D}=0$, and together with (\ref{eq:runge_v}) we obtain that $v|_{D}\in H^1(D) $ solves
\[
-\nabla\cdot(\lambda \nabla\cdot v+2\mu \nabla^s v) +\rho v =f \quad \textrm{ in } D,
\]
with homogeneous Dirichlet boundary data $v|_{\partial D}=0$. 
Let $w\in S_D$,  from (\ref{eq:runge_v}) it then follows that $\int_{D}f \cdot w\,dx=0$ for all $w\in  S_D$. This implies that $f$ is $L^2$-orthonal to $S_D$ and the proof is completed.
\end{proof}
In the following theorem, we demonstrate the controllability of the energy terms $\Vert u^g_\rho\Vert^2$ within the monotonicity relation \eqref{eqmono} by employing the method of localized potentials. Specifically, the energy $\Vert u^g_\rho\Vert^2$ can be intentionally increased to arbitrary levels within a specified subset $\mathcal{D}_1 \subset \Omega$, without causing a corresponding increase in another subset 
$\mathcal{D}_2 \subset \Omega$. This holds true, especially when access from the boundary $\partial \Omega$ to $\mathcal{D}_1$ is attainable without passing through $\mathcal{D}_2$.
\begin{theorem}[Localized potentials]
\label{locpot}
Let  $\lambda, \mu, \rho$  satisfying Hypothesis 1  and   $\mathcal{D}_1, \mathcal{D}_2$ be two  open sets with  $\overline{\mathcal{D}}_1, \overline{\mathcal{D}_2}\subset \Omega$, 
  $\overline{\mathcal{D}}_1\cap \overline{\mathcal{D}_2}=\emptyset$,  $\Omega\setminus\left(\overline{\mathcal{D}_1}\cup\overline{\mathcal{D}_2}\right)$ is connected
and  $\overline{\Omega\setminus\left(\overline{\mathcal{D}_1}\cup\overline{\mathcal{D}_2}\right)}\cap \Gamma_N \neq\emptyset$.  
Then,  there exists a sequence
$(g_n)_{n\in \mathbb{N}}\subset L^2(\Gamma_{\textup N}, \mathbb{R}^d)$, such that the corresponding solutions
$(u^{g_n})_{n\in \mathbb{N}}$ of \eqref{direct} fulfill 
\begin{align}
&\lim_{n\to \infty}\int_{\mathcal{D}_1}  \Vert  u^{ g_n}\Vert^2\,dx=\infty,\label{localized_grad_1}\\
&\lim_{n\to \infty}\int_{\mathcal{D}_2}  \Vert u^{ g_n} \Vert^2 \,dx=0.\label{localized_grad_2}
\end{align}
\end{theorem}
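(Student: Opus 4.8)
The plan is to obtain Theorem \ref{locpot} as a consequence of the Runge approximation result (Lemma \ref{thm:runge}), followed by a rescaling/diagonal argument that inflates the energy on $\mathcal{D}_1$ to infinity. The point is that the two geometric requirements appearing in the statement are exactly what is needed to apply Lemma \ref{thm:runge} to the \emph{disconnected} witness set $D:=\mathcal{D}_1\cup\mathcal{D}_2$, on which one can prescribe independent target solutions over the two components.

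First I would apply Lemma \ref{thm:runge} with $D:=\mathcal{D}_1\cup\mathcal{D}_2$. Since $\overline{\mathcal{D}}_1,\overline{\mathcal{D}_2}\subset\Omega$ with $\overline{\mathcal{D}}_1\cap\overline{\mathcal{D}_2}=\emptyset$, the complement $\Omega\setminus D$ consists of the connected set $\Omega\setminus(\overline{\mathcal{D}_1}\cup\overline{\mathcal{D}_2})$ together with the interior boundary pieces $\partial\mathcal{D}_1,\partial\mathcal{D}_2$, hence is itself connected and, by the hypothesis $\overline{\Omega\setminus(\overline{\mathcal{D}_1}\cup\overline{\mathcal{D}_2})}\cap\Gamma_N\neq\emptyset$, meets $\Gamma_N$; these are precisely the properties used by the unique-continuation step in the proof of Lemma \ref{thm:runge}. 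Because $D$ is a disjoint union, $S_D$ decouples as solutions on $\mathcal{D}_1$ paired with solutions on $\mathcal{D}_2$, so I may pick a target $f\in S_D$ with $f|_{\mathcal{D}_2}=0$ and $f|_{\mathcal{D}_1}=f_1$, where $f_1$ is any fixed \emph{nonzero} solution of the homogeneous system on $\mathcal{D}_1$ (such $f_1$ exists, e.g.\ by restricting a solution of a well-posed boundary value problem on a neighbourhood of $\overline{\mathcal{D}_1}$). Lemma \ref{thm:runge} then yields $(g_n)\subset L^2(\Gamma_N,\R^d)$ with $u^{g_n}|_D\to f$ in $L^2(D,\R^d)$, so that simultaneously $\int_{\mathcal{D}_2}\|u^{g_n}\|^2\,dx\to 0$ and $\int_{\mathcal{D}_1}\|u^{g_n}\|^2\,dx\to\|f_1\|_{L^2(\mathcal{D}_1)}^2>0$; thus there exist $c_0>0$ and $N$ with $\int_{\mathcal{D}_1}\|u^{g_n}\|^2\,dx\geq c_0$ for $n\geq N$, while the $\mathcal{D}_2$-energy is arbitrarily small.

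It remains to upgrade this ``bounded below versus small'' behaviour to the required ``$\infty$ versus $0$'', and here I would use linearity of the forward map $g\mapsto u^g$ (valid since \eqref{eqv1} is linear), which gives $u^{cg}=c\,u^g$ and hence $\int\|u^{cg}\|^2=c^2\int\|u^g\|^2$. For each $k\in\mathbb{N}$ choose, from the sequence just constructed, an index $n_k$ with $\int_{\mathcal{D}_1}\|u^{g_{n_k}}\|^2\,dx\geq c_0$ and $\int_{\mathcal{D}_2}\|u^{g_{n_k}}\|^2\,dx< k^{-3}$, and set $h_k:=k\,g_{n_k}$. Then $\int_{\mathcal{D}_1}\|u^{h_k}\|^2\,dx\geq k^2 c_0\to\infty$ and $\int_{\mathcal{D}_2}\|u^{h_k}\|^2\,dx< k^{-1}\to 0$, which after relabelling $(h_k)$ as the sought sequence gives exactly \eqref{localized_grad_1}--\eqref{localized_grad_2}.

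The genuinely substantive content has already been absorbed into Lemma \ref{thm:runge} (the unique-continuation and density argument), so I expect the main obstacle here to be careful bookkeeping rather than a new idea: namely, verifying rigorously that Lemma \ref{thm:runge} applies to the disconnected set $D=\mathcal{D}_1\cup\mathcal{D}_2$ (connectedness of $\Omega\setminus D$ and its contact with $\Gamma_N$), and making the decoupling $S_D\cong S_{\mathcal{D}_1}\times S_{\mathcal{D}_2}$ precise so that the split target $(f_1,0)$ is an admissible element of $S_D$. The one point that cannot be skipped is that Runge approximation only delivers convergence to a \emph{finite} target, so the $\mathcal{D}_1$-energy must be amplified by the rescaling step; equivalently, one may phrase the whole argument as the failure of an inequality $\int_{\mathcal{D}_1}\|u^g\|^2\leq C\int_{\mathcal{D}_2}\|u^g\|^2$ for every $C$, which the Runge construction above contradicts.
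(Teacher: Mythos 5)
Your proposal is correct and follows essentially the same route as the paper: the paper also applies the Runge approximation lemma to $D=\mathcal{D}_1\cup\mathcal{D}_2$ with a target $\Phi\in S_D$ that vanishes on $\mathcal{D}_2$ and is nonzero on $\mathcal{D}_1$ (constructed via auxiliary Neumann problems on the two components), and then rescales by linearity, using the normalization $g_n=\tilde g_n/\bigl(\int_{\mathcal{D}_2}\Vert u^{\tilde g_n}\Vert^2\,dx\bigr)^{1/4}$ in place of your diagonal subsequence. The two rescalings are interchangeable (yours even sidesteps the implicit assumption that the $\mathcal{D}_2$-energies are nonzero), so there is nothing substantive to flag.
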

\begin{proof}
Let $\mathcal D_1$,  $\mathcal D_2$ defined as in Theorem \ref{locpot} and consider 
the function $\Phi$ solution of 
\[
-\nabla\cdot(\lambda \nabla\cdot \Phi+2\mu \nabla^s \Phi) +\rho \Phi =0 \quad \textrm{ in } \mathcal D_1, \quad (\lambda \nabla\cdot \Phi+2\mu \nabla^s \Phi)\nu= \tilde g \text{ on }
\partial \mathcal D_1
\]
\[
-\nabla\cdot(\lambda \nabla\cdot \Phi+2\mu \nabla^s \Phi) +\rho \Phi=0 \quad \textrm{ in } \mathcal D_2, \quad (\lambda \nabla\cdot \Phi+2\mu \nabla^s \Phi)\nu=  0 \text{ on }
\partial \mathcal D_2,
\]
where $0\neq \tilde g\in L^2(\partial \mathcal D_1,\R^d)$.   Obviously $\Phi \in S_D$ where $D=\mathcal D_1\cup\mathcal D_2$
and $\Phi\vert_{\mathcal D_1}\not\equiv 0$,   $\Phi\vert_{\mathcal D_2}\equiv 0$. Using the runge approximation result
in Lemma \ref{thm:runge},  we can find a sequence $(\tilde g_n)\subset L^2(\Gamma_N,\R^d)$ such  that  the corresponding 
solutions $u^{\tilde g_n}$ of problem \eqref{direct} with Neumann boundary data $ \tilde g_n$ fulfill
\[
u^{\tilde g_n}\rightarrow \Phi \text{ in }  L^2(D, \R^d).
\]
Consier the scaled  sequence 
\[
g_n =\frac{\tilde g_n}{\left(\int_{\mathcal D_2} \Vert u^{\tilde g_n} \Vert^2\,dx\right)^{1/4}},
\]
we obtain 
\begin{align*}
&\int_{\mathcal{D}_1}  \Vert  u^{ g_n}\Vert^2\,dx=  \frac{\int_{\mathcal{D}_1}  \Vert  u^{\tilde g_n}\Vert^2\,dx}{\sqrt{\int_{\mathcal{D}_2}  \Vert  u^{\tilde g_n}\Vert^2\,dx}}
\rightarrow \infty \quad\text{ as  }  n\rightarrow \infty, \\
&\int_{\mathcal{D}_2}  \Vert u^{ g_n} \Vert^2 \,dx=  \frac{\int_{\mathcal{D}_2}  \Vert  u^{\tilde g_n}\Vert^2\,dx}{\sqrt{\int_{\mathcal{D}_2}  \Vert  u^{\tilde g_n}\Vert^2\,dx}}
\rightarrow 0 \quad \text{ as  }  n\rightarrow \infty,
\end{align*}
and the proof is completed.
\end{proof}
\subsubsection{Lipschitz stability result}
In the following theorem, we state the main  result of this section.
Let  $\mathcal{E}$ be a finite dimensional subspace of $L^\infty(\Omega)$.  We consider two constants \mbox{$0<a\leq b$},  which are the lower and upper bounds of the  density 
$\rho$  and define the set
\[
\mathcal{E}_{[a,b]}=\left\{ \rho\in \mathcal{E}:  \quad  a\leq\rho(x)\leq b,   \quad \text{ for all } x\in \Omega \right\}.
\]
The domain $\Omega$, the finite-dimensional subspace $\mathcal{E}
$ and the bounds $0< a\leq b$   are fixed, and the constant in the Lipschitz stability result
 will depends on them.
\begin{theorem}[Lipschitz stability]
\label{stability}
 There exists a positive constant $C>0$ such that for all $\rho_1, \rho_2 \in \mathcal{E}_{[a,b]}$,  we have 
\begin{equation}
\label{stab-est}
 \Vert\rho_1-\rho_2\Vert_{L^\infty(\Omega)}
\leq   C\| \Lambda(\rho_1)-\Lambda(\rho_2) \|_*.
\end{equation}
Here  $\Vert.\Vert_*$ is the operator norm of $\mathcal L(L^2(\Gamma_{\textup N},\R^d))$. 
\end{theorem}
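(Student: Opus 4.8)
The plan is to convert the monotonicity relation (Lemma \ref{mono}) and the localized-potentials theorem (Theorem \ref{locpot}) into two injectivity statements and then upgrade these to the Lipschitz bound by a compactness argument that exploits the finite dimensionality of $\mathcal E$. First I would record the structural facts I will use: $\mathcal E_{[a,b]}$ is a closed and bounded, hence compact, subset of the finite-dimensional space $\mathcal E$, and all norms on $\mathcal E$ are equivalent; the map $\rho\mapsto\Lambda_\rho$ is continuous into $\mathcal L(L^2(\Gamma_N,\R^d))$, where each $\Lambda_\rho$ is self-adjoint so that its operator norm is governed by the quadratic form $g\mapsto\langle g,\Lambda_\rho g\rangle$; and $\rho\mapsto\Lambda_\rho$ is Fréchet differentiable with derivative
\[
\langle g,\Lambda'_\rho[\kappa]\,g\rangle=-\int_\Omega\kappa\,\Vert u^g_\rho\Vert^2\,dx .
\]
This last formula follows directly from Lemma \ref{mono}: taking $\rho_1=\rho+t\kappa$, $\rho_2=\rho$, dividing \eqref{eqmono} by $t$ and letting $t\to0$, the two outer terms both converge to $\int_\Omega\kappa\,\Vert u^g_\rho\Vert^2\,dx$ by continuous dependence of $u^g_\rho$ on $\rho$, squeezing the middle difference quotient.

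The two injectivity statements I would establish are: (i) global injectivity, namely $\Lambda_{\rho_1}=\Lambda_{\rho_2}$ implies $\rho_1=\rho_2$; and (ii) injectivity of the derivative $\kappa\mapsto\Lambda'_\rho[\kappa]$ on $\mathcal E$ for each fixed $\rho$. For (i), if $\Lambda_{\rho_1}=\Lambda_{\rho_2}$ then \eqref{eqmono} forces $\int_\Omega(\rho_1-\rho_2)\Vert u^g_{\rho_1}\Vert^2\,dx\le0\le\int_\Omega(\rho_1-\rho_2)\Vert u^g_{\rho_2}\Vert^2\,dx$ for every $g$. Assuming $\rho_1\neq\rho_2$, I would select an open set $\mathcal D_1$ on which $\rho_1-\rho_2$ is sign-definite, say $\ge c>0$, together with an admissible companion set $\mathcal D_2$ satisfying the hypotheses of Theorem \ref{locpot}, and drive $\int_{\mathcal D_1}\Vert u^{g_n}\Vert^2\to\infty$ while $\int_{\mathcal D_2}\Vert u^{g_n}\Vert^2\to0$, violating the above sign constraint. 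Statement (ii) is the linearized counterpart: if $\int_\Omega\kappa\,\Vert u^g_\rho\Vert^2\,dx=0$ for all $g$ while $\kappa\neq0$, the same localized-potential construction on a sign-definite region of $\kappa$ yields a contradiction.

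With (i) and (ii) available, I would prove \eqref{stab-est} by contradiction. If no constant $C$ works, there exist $\rho_1^{(k)},\rho_2^{(k)}\in\mathcal E_{[a,b]}$ with $\Vert\rho_1^{(k)}-\rho_2^{(k)}\Vert_{L^\infty}>k\,\Vert\Lambda_{\rho_1^{(k)}}-\Lambda_{\rho_2^{(k)}}\Vert_*$. Setting $\kappa_k=\rho_1^{(k)}-\rho_2^{(k)}$ and $\hat\kappa_k=\kappa_k/\Vert\kappa_k\Vert_{L^\infty}$, compactness of $\mathcal E_{[a,b]}$ and of the unit sphere of $\mathcal E$ lets me pass to subsequences with $\rho_2^{(k)}\to\rho_0\in\mathcal E_{[a,b]}$, $\hat\kappa_k\to\kappa_0$ where $\Vert\kappa_0\Vert_{L^\infty}=1$, and $\Vert\kappa_k\Vert_{L^\infty}\to s\ge0$. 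If $s>0$, then $\rho_1^{(k)}\to\rho_0+s\kappa_0\neq\rho_0$ while $\Vert\Lambda_{\rho_1^{(k)}}-\Lambda_{\rho_2^{(k)}}\Vert_*\to0$, so continuity and (i) force $s\kappa_0=0$, a contradiction. If $s=0$, dividing the defining inequality by $\Vert\kappa_k\Vert_{L^\infty}$ and using Fréchet differentiability gives $\Lambda'_{\rho_0}[\kappa_0]=0$, contradicting (ii) since $\kappa_0\neq0$. Either case closes the argument.

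The hard part will be steps (ii) and the corresponding part of (i) when the perturbation $\kappa$ \emph{changes sign}. Theorem \ref{locpot} controls the energy only on $\mathcal D_1$ and $\mathcal D_2$, leaving it uncontrolled on the connecting region $\Omega\setminus(\overline{\mathcal D_1}\cup\overline{\mathcal D_2})$, which must reach $\Gamma_N$; so I must ensure that the blow-up of $\int_{\mathcal D_1}\kappa\,\Vert u^{g_n}\Vert^2\,dx$ genuinely dominates the opposite-sign contribution on that region. The plan is to place $\mathcal D_1$ inside a sign-definite region of $\kappa$ and choose $\mathcal D_2$ to shield the most unfavorable opposite-sign part, relying on the precise connectivity hypotheses of Theorem \ref{locpot} and on the fact that, because $\kappa$ ranges over the finite-dimensional space $\mathcal E$, only finitely many qualitative sign patterns occur, over which the construction can be organized uniformly. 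This domination estimate is the crux on which the whole stability result rests.
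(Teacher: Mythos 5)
Your outer argument is a genuinely different route from the paper's. The paper does not argue by contradiction with sequences: it first proves (Lemma~\ref{lem_lower}) that the quotient $\Vert\Lambda_{\rho_1}-\Lambda_{\rho_2}\Vert_*/\Vert\rho_1-\rho_2\Vert_{L^\infty}$ is bounded below by
$\inf_{\zeta,\tau_1,\tau_2}\sup_{\Vert g\Vert=1}\max\{J(g,\zeta,\tau_1),J(g,-\zeta,\tau_2)\}$ with $J(g,\zeta,\tau)=\int_\Omega\zeta\Vert u^g_\tau\Vert^2dx$, where $\zeta$ ranges over the unit sphere $\mathcal K$ of $\mathcal E$ and $\tau_1,\tau_2$ over $\mathcal E_{[a,b]}$; it then shows the infimum is attained (lower semicontinuity of a sup of continuous functions on a compact set) and strictly positive (localized potentials). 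Your compactness--contradiction scheme with the two injectivity statements is the classical alternative and is sound; note, though, that your two cases $s>0$ and $s=0$ both collapse, after the sandwich argument you describe, to the single statement ``$\int_\Omega\kappa_0\Vert u^g_{\rho_0}\Vert^2dx$ has no positive value over $g$ for either sign of $\kappa_0$ implies $\kappa_0=0$,'' which is exactly the positivity of the paper's functional at the limit point $(\kappa_0,\rho_0)$. The paper's formulation buys two things: it never needs Fr\'echet (or uniform-in-base-point) differentiability of $\rho\mapsto\Lambda_\rho$, only the monotonicity sandwich itself; and it treats the ``finite distance'' and ``linearized'' cases in one stroke because $J$ already separates the direction $\zeta$ from the base point $\tau$. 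Your route additionally owes a justification of the $o(\Vert\kappa_k\Vert)$ remainder uniformly over the compact set of base points if you insist on phrasing the $s=0$ case through $\Lambda'_{\rho_0}$; the sandwich version you sketch avoids this and is preferable.

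On the step you flag as the crux: your plan is in fact exactly what the paper does, so complete it that way rather than via ``finitely many sign patterns'' (a general finite-dimensional subspace of $L^\infty$ does not have finitely many sign patterns, and this is not needed). Fix $\zeta\in\mathcal K$; since $\Vert\zeta\Vert_{L^\infty}=1$ there are an open $\mathcal D_1$ and $\beta\in(0,1)$ with $\zeta\geq\beta$ (or $-\zeta\geq\beta$) on $\mathcal D_1$. Choose $\mathcal D_2$ disjoint from $\overline{\mathcal D_1}$, containing (up to a null set) the region where $\zeta<0$, and such that $\Omega\setminus(\overline{\mathcal D_1}\cup\overline{\mathcal D_2})$ is connected and reaches $\Gamma_N$; then Theorem~\ref{locpot} gives $g$ with $\int_{\mathcal D_1}\Vert u^g_{\tau_1}\Vert^2dx\geq 2/\beta$ and $\int_{\mathcal D_2}\Vert u^g_{\tau_1}\Vert^2dx\leq 1$, and the domination estimate is immediate: the contribution of $\Omega\setminus(\mathcal D_1\cup\mathcal D_2)$ is nonnegative because $\zeta\geq0$ there, while the adverse contribution is bounded by $\Vert\zeta\Vert_{L^\infty}\int_{\mathcal D_2}\Vert u^g_{\tau_1}\Vert^2dx\leq1$, giving $J(g,\zeta,\tau_1)\geq 2-1=1$. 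The only genuinely delicate point, which neither you nor the paper addresses explicitly, is that such a $\mathcal D_2$ with the required connectivity can actually be chosen around the negative set of $\zeta$; this is clear for the piecewise-constant classes of Section~3.2.4 and should be stated as a hypothesis (or verified) for the general finite-dimensional $\mathcal E$.
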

To prove Theorem \ref{stability},  we  first prove  the   following lemmas.
\begin{lemma}
\label{lem_lower}
For $\rho_1, \rho_2 \in \mathcal{E}_{[a,b]}$  with   $\rho_1 \neq \rho_2$,  we have
\[
\frac{\| \Lambda_{\rho_1}-\Lambda_{\rho_2} \Vert_*}{\Vert \rho_1-\rho_2\Vert_{L^\infty(\Omega)}}\geq
\inf_{\substack{\zeta\in \mathcal{K},\\ \tau_1,\tau_2\in\mathcal{E}_{[a,b]}}} \sup_{\| g \|=1} \max\left\{ J\left(g,\zeta,\tau_1\right), J\left(g,-\zeta,\tau_2\right) \right\},
\]
where 
\begin{equation}\label{func_J}
J :\ L^2(\Gamma_{\textup N},\R^d)\times \mathcal{E}\times \mathcal{E}_{[a,b]}\to \mathbb{R}: 
\left(g,\zeta,\tau\right)\mapsto \int_{\Omega} \zeta  u_{\tau}^{g} \cdot  u_{\tau}^{g}\,dx,
\end{equation}
and 
\begin{equation}\label{set_K}
 \mathcal{K}:=\left\{\zeta\in \mathcal{E}:  \quad 
\Vert \zeta\Vert_{L^\infty(\Omega)}= 1 \right\}.
\end{equation}
\end{lemma}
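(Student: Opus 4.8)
The plan is to reduce the operator norm on the left to a quadratic form and then sandwich that form using the two-sided monotonicity estimate of Lemma~\ref{mono}. Since the bilinear form in \eqref{eqv1} is symmetric, each $\Lambda_\rho$ is self-adjoint on $L^2(\Gamma_{\textup N},\R^d)$, hence so is the difference $\Lambda_{\rho_1}-\Lambda_{\rho_2}$, and I would first record the resulting norm representation
\[
\| \Lambda_{\rho_1}-\Lambda_{\rho_2}\|_*=\sup_{\|g\|=1}\left|\, \langle g,(\Lambda_{\rho_1}-\Lambda_{\rho_2})(g)\rangle\,\right|.
\]
This converts the quantity we must bound from below into a supremum of a scalar quadratic form that can be plugged directly into the monotonicity relation.

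Next, writing $u_i:=u_{\rho_i}^{g}$ and $m(g):=\langle g,(\Lambda_{\rho_1}-\Lambda_{\rho_2})(g)\rangle$, I would substitute the identity $\langle g,\Lambda_{\rho_2}(g)\rangle-\langle g,\Lambda_{\rho_1}(g)\rangle=-m(g)$ into \eqref{eqmono} to obtain
\[
\int_\Omega(\rho_1-\rho_2)\|u_2\|^2\,dx\ \geq\ -m(g)\ \geq\ \int_\Omega(\rho_1-\rho_2)\|u_1\|^2\,dx.
\]
The right inequality is a lower bound for $-m(g)$ and the left one (after a sign change) a lower bound for $m(g)$, so passing to the absolute value yields
\[
|m(g)|=\max\{m(g),-m(g)\}\ \geq\ \max\left\{\int_\Omega(\rho_2-\rho_1)\|u_2\|^2\,dx,\ \int_\Omega(\rho_1-\rho_2)\|u_1\|^2\,dx\right\}.
\]
This is precisely the step that manufactures the two branches of the maximum in the statement, one from each side of the monotonicity sandwich.

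Finally, I would normalize by setting $\zeta:=(\rho_1-\rho_2)/\|\rho_1-\rho_2\|_{L^\infty(\Omega)}$; because $\mathcal{E}$ is a subspace we have $\rho_1-\rho_2\in\mathcal{E}$, so $\zeta\in\mathcal{K}$ by \eqref{set_K}. Dividing the previous display by $\|\rho_1-\rho_2\|_{L^\infty(\Omega)}$ and recalling \eqref{func_J} identifies the two integrals as $J(g,\zeta,\rho_1)$ and $J(g,-\zeta,\rho_2)$, giving
\[
\frac{|m(g)|}{\|\rho_1-\rho_2\|_{L^\infty(\Omega)}}\ \geq\ \max\{J(g,\zeta,\rho_1),\,J(g,-\zeta,\rho_2)\}.
\]
Taking the supremum over $\|g\|=1$ and then noting that $\rho_1,\rho_2\in\mathcal{E}_{[a,b]}$ makes the particular choice $\tau_1=\rho_1$, $\tau_2=\rho_2$ (together with this $\zeta$) admissible for the infimum on the right-hand side of the claimed inequality, which therefore dominates the right-hand side and yields the result.

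I expect the only genuinely delicate points to be bookkeeping rather than analysis: one must track the signs carefully so that each side of the monotonicity estimate lands in the correct branch of the maximum, and one must justify the norm representation via self-adjointness of $\Lambda_\rho$, which follows from the symmetry of the bilinear form and is best verified at the outset. Everything else is a direct consequence of Lemma~\ref{mono} and the definitions of $J$ and $\mathcal{K}$.
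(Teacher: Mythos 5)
Your proposal is correct and follows essentially the same route as the paper's proof: self-adjointness to express the operator norm as a supremum of the quadratic form with a two-sided maximum, the monotonicity sandwich of Lemma~\ref{mono} to bound each branch, and normalization by $\Vert\rho_1-\rho_2\Vert_{L^\infty(\Omega)}$ with $\zeta=(\rho_1-\rho_2)/\Vert\rho_1-\rho_2\Vert_{L^\infty(\Omega)}\in\mathcal{K}$ before passing to the infimum. The sign bookkeeping in your maximum is consistent with the paper's estimate~(\ref{estim_1}), so no gap remains.
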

\begin{proof}
For the sake of brevity, we  will write  $\Vert g \Vert$  for  $\Vert g\Vert_{L^2(\Gamma_{\textup N},\R^d)}$.
The operators  $\Lambda_{\rho_1}$ and $\Lambda_{\rho_2}$ are self-adjoint,
so  that 
\begin{align*}
\Vert\Lambda_{\rho_1}-\Lambda_{\rho_2}\Vert_*&= 
  \sup_{\Vert g\Vert=1} \Vert \langle g, \left(\Lambda_{\rho_1}-\Lambda_{\rho_2}\right) g\rangle\Vert\\
&= \sup_{\Vert g\Vert=1}\max\left\{\langle g, \left(\Lambda_{\rho_1}-\Lambda_{\rho_2}\right) g\rangle,  \langle g, \left(\Lambda_{\rho_2}-\Lambda_{\rho_1}\right) g\rangle  \right\}.
\end{align*}
Next, we apply the monotonicity relation \eqref{eqmono} in Lemma \ref{mono} and  we  obtain 
for all $g\in L^2(\Gamma_{\textup N},\R^d)$ that
 \begin{align}\label{estim_1}
\Vert\Lambda_{\rho_1}-\Lambda_{\rho_2}\Vert_*
\geq \sup_{\Vert g\Vert=1}\max\left\{ \int_\Omega(\rho_1-\rho_2)\Vert  u_{\rho_1}^{g}\Vert^2\,dx,
-\int_\Omega(\rho_1-\rho_2)\Vert u_{\rho_2}^{g}\Vert^2 \,dx  \right\},
 \end{align}
where $u_{\rho_j}^{g}\in \mathcal{V}$ denotes the solution of \eqref{direct} with Neumann data $g$ and  the density  $\rho_j$.

The estimate (\ref{estim_1}) contains the linear difference  $\rho_1-\rho_2$,
but it also contains the solutions $u_{\rho_1}^{g}$  and $u_{\rho_2}^{g}$  that depend non-linearly on the coefficients.
Following the ideas of \cite{harrach2019global,harrach2019uniqueness}, we obtain for $\rho_1\neq \rho_2$,
\begin{align}\label{estim_3}
\nonumber
&\frac{\Vert \Lambda_{\rho_2}-\Lambda_{\rho_1} \Vert_*}{\Vert\rho_1-\rho_2\Vert_{L^\infty(\Omega)}} \\ 
&\geq   \sup_{\Vert g\Vert=1}\max\left\{  J\left(g,\frac{\rho_1-\rho_2}{\Vert \rho_1-\rho_2 \Vert_{L^\infty(\Omega)}}, \rho_1\right),
  J\left(g,\frac{\rho_2-\rho_1}{\Vert \rho_1-\rho_2 \Vert_{L^\infty(\Omega)}}, \rho_2\right)\right\}.
\end{align}
The second argument of the function  $h$ in (\ref{estim_3}) belongs to the compact set
\begin{align*}
 \mathcal{K}&:=\left\{\zeta\in \mathcal{E}:  \quad 
\Vert \zeta\Vert_{L^\infty(\Omega)}= 1 \right\}.
\end{align*}
Hence, (\ref{estim_3}) yields that
\begin{align*}
\frac{\| \Lambda_{\rho_1}-\Lambda_{\rho_2} \Vert_*}{\Vert \rho_1-\rho_2\Vert_{L^\infty(\Omega)}}\geq
\inf_{\substack{\zeta\in \mathcal{K},\\ \tau_1,\tau_2\in\mathcal{E}_{[a,b]}}} \sup_{\| g \|=1} \max\left\{ J\left(g,\zeta,\tau_1\right), J\left(g,-\zeta,\tau_2\right) \right\},
\end{align*}
 and the proof is completed.
 \end{proof}
 In the next lemma, we use a compactness argument to show that the lower  bound  in Lemma \ref{lem_lower} attains  its  infimum. 
 \begin{lemma}
 \label{lem_inf}
 There exists  $(\hat \zeta,  \hat \tau_1,  \hat \tau_2)\in \mathcal{K}\times \mathcal{E}_{[a,b]}\times \mathcal{E}_{[a,b]} $ such that
 \[
 \inf_{\substack{\zeta\in \mathcal{K},\\ \tau_1,\tau_2\in\mathcal{E}_{[a,b]}}} \sup_{\| g \|=1} \max\left\{ J\left(g,\zeta,\tau_1\right), J\left(g,-\zeta,\tau_2\right) \right\}
 =  \sup_{\| g \|=1} \max\left\{ J\left(g,\hat \zeta,\hat\tau_1\right), J\left(g,-\hat\zeta,\hat\tau_2\right) \right\}.
 \]
 \end{lemma}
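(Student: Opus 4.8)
The plan is to identify the quantity to be minimized as a lower semicontinuous function on a compact set and to invoke the elementary principle that such a function attains its infimum. Write
\[
F(\zeta,\tau_1,\tau_2):=\sup_{\|g\|=1}\max\left\{J(g,\zeta,\tau_1),\,J(g,-\zeta,\tau_2)\right\},
\]
so that the assertion is exactly that $F$ attains its infimum over $\mathcal{K}\times\mathcal{E}_{[a,b]}\times\mathcal{E}_{[a,b]}$. First I would observe that this set is compact: since $\mathcal{E}$ is finite-dimensional all norms on it are equivalent, $\mathcal{K}$ from \eqref{set_K} is the closed bounded unit sphere, $\mathcal{E}_{[a,b]}$ is closed and bounded, and a finite product of compact sets is compact.

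The technical heart of the argument is the continuous dependence of the state $u_\tau^g$ on the coefficient $\tau$. Let $a_\tau(\cdot,\cdot)$ denote the bilinear form on the left of \eqref{eqv1}; note that in the present section $\lambda,\mu$ are fixed and only the zeroth-order coefficient $\tau=\rho$ varies. For fixed $g$ and $\tau_n\to\tau$ in $\mathcal{E}_{[a,b]}$, subtracting the identities \eqref{eqv1} for $\tau_n$ and $\tau$ and testing with $w=u_{\tau_n}^g-u_\tau^g$ gives
\[
a_{\tau_n}\!\left(u_{\tau_n}^g-u_\tau^g,\,u_{\tau_n}^g-u_\tau^g\right)=\int_\Omega(\tau-\tau_n)\,u_\tau^g\cdot\left(u_{\tau_n}^g-u_\tau^g\right)\,dx.
\]
By Korn's inequality together with Hypothesis 1 (and since $\tau_n\ge a>0$, so the coercivity constant is uniform in $n$) the left-hand side dominates $c\|u_{\tau_n}^g-u_\tau^g\|_{H^1(\Omega)}^2$, while the right-hand side is bounded by $\|\tau-\tau_n\|_{L^\infty(\Omega)}\|u_\tau^g\|_{L^2(\Omega)}\|u_{\tau_n}^g-u_\tau^g\|_{H^1(\Omega)}$; hence $\|u_{\tau_n}^g-u_\tau^g\|_{H^1(\Omega)}\le C\|\tau-\tau_n\|_{L^\infty(\Omega)}\to 0$. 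Because the $u_{\tau_n}^g$ are uniformly bounded in $L^2$ by the Lax--Milgram estimate and $|u_{\tau_n}^g|^2-|u_\tau^g|^2=(u_{\tau_n}^g-u_\tau^g)\cdot(u_{\tau_n}^g+u_\tau^g)$ converges to $0$ in $L^1$, the linearity of $J$ in $\zeta$ (see \eqref{func_J}) yields that $(\zeta,\tau)\mapsto J(g,\zeta,\tau)$ is jointly continuous for each fixed $g$.

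With this continuity in hand the conclusion is immediate. For fixed $g$ the map $(\zeta,\tau_1,\tau_2)\mapsto\max\{J(g,\zeta,\tau_1),J(g,-\zeta,\tau_2)\}$ is continuous, being the maximum of two continuous functions, so $F$ is a pointwise supremum of continuous functions and therefore lower semicontinuous; the same Lax--Milgram bound shows $J$ is uniformly bounded over $\|g\|=1$ and the compact parameter sets, so $F$ is finite. A lower semicontinuous real-valued function on a nonempty compact set attains its infimum, which furnishes the minimizer $(\hat\zeta,\hat\tau_1,\hat\tau_2)$ and proves the lemma. I expect the only genuine obstacle to be the coefficient-to-solution estimate above; the remainder is the standard lower-semicontinuity-on-a-compact-set principle, and the structural hypotheses (finite-dimensionality of $\mathcal{E}$, the bound $a>0$, and Korn coercivity) enter precisely there.
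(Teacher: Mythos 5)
Your proof is correct and follows essentially the same route as the paper's: continuity of $(\zeta,\tau_1,\tau_2)\mapsto\max\{J(g,\zeta,\tau_1),J(g,-\zeta,\tau_2)\}$ for each fixed $g$, hence lower semicontinuity of the supremum, hence attainment of the infimum on the compact set $\mathcal{K}\times\mathcal{E}_{[a,b]}\times\mathcal{E}_{[a,b]}$. The only difference is that the paper asserts the continuity of $J$ in the coefficient without proof, whereas you supply the coefficient-to-solution energy estimate that justifies it; that added detail is sound and strengthens the argument.
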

 \begin{proof}
 Since $(g,\zeta,\tau_1,\tau_2)\rightarrow\max\left\{ J\left(g,\zeta,\tau_1\right), J\left(g,-\zeta,\tau_2\right) \right\}$ is continuous, the function
\[
(\zeta,\tau_1,\tau_2)\rightarrow \sup_{\| g \|=1}\max\left\{ J\left(g,\zeta,\tau_1\right), J\left(g,-\zeta,\tau_2\right) \right\}
\]
is lower semi-continuous, so that it attains its infimum on  the compact set  $\mathcal{K}\times \mathcal{E}_{[a,b]}\times\mathcal{E}_{[a,b]}$.
\end{proof}
  \begin{proof}[Proof of theorem \ref{stability}]
To prove  Theorem \ref{stability},  it remains to show  that the infimum attained  in Lemma \ref{lem_inf} is strictly positive.  Hence,  it suffices to show that
\begin{align}\label{estim_4}
 \sup_{\Vert g \Vert=1}\max\left\{ J\left(g,\zeta,\tau_1\right), J\left(g,-\zeta,\tau_2\right) \right\}>0
\quad \text{ for all }  \zeta\in \mathcal{K}, \tau_1, \tau_2\in \mathcal{E}_{[a,b]}.
\end{align}
In order to prove that (\ref{estim_4}) holds true, let $\left(\zeta,\tau_1,\tau_2\right)\in \mathcal{K}\times \mathcal{E}_{[a,b]}\times \mathcal{E}_{[a,b]}$.
Then there exist an open subset $\emptyset\neq \mathcal{D}_1\subset \Omega$ and  a constant $0<\beta<1$,  such that 
\[
\begin{aligned}
&\text{(i)}\   \zeta|_{ \mathcal{D}_1}\geq \beta , \text{ or }\\
&\text{(ii)}\   -\zeta|_{ \mathcal{D}_1}\geq \beta.
\end{aligned}
\]
We use the localized potentials sequence in Theorem~\ref{locpot} to obtain an open subset $ \mathcal{D}_2\subset \Omega$  with  $\overline{ \mathcal{D}_1}\cap \overline{ \mathcal{D}_2}=\emptyset$, and a  boundary load
$\tilde g\in L^2(\Gamma_{\textup N}, \R^d)$ with 
\begin{align}\label{estim_loc_pot}
\int_{ \mathcal{D}_1} \Vert u^{\tilde g}_{\tau_1} \Vert^2\,dx   \geq \frac{2}{\beta} \quad
 \text{ and } \quad \int_{ \mathcal{D}_2} \Vert u^{\tilde g}_{\tau_1} \Vert^2\,dx  \leq 1.
\end{align}

In case $(i)$, this leads to
\[
\begin{aligned}
&J\left(\tilde g,\zeta,\tau_1\right)= \int_\Omega \zeta \Vert  u^{\tilde g}_{\tau_1}\Vert^2\, dx 
 \geq \int_{\mathcal{D}_1} \zeta \Vert  u^{\tilde g}_{\tau_1}\Vert^2\, dx
 + \int_{\mathcal{D}_2} \zeta \Vert u^{\tilde g}_{\tau_1}\Vert^2\, dx\\
 & \geq  \beta\int_{\mathcal{D}_1}  \Vert u^{\tilde g}_{\tau_1}\Vert^2\, dx
 -\int_{\mathcal{D}_2}  \Vert  u^{\tilde g}_{\tau_1}\Vert^2\, dx
 \geq 2- 1=1,
 \end{aligned}
\]
and in case (ii),   we can analogously use a localized potentials sequence for $\tau_2$ and find   $\hat g\in L^2(\Gamma_{\textup N}, \R^d)$  with
\[
\begin{aligned}
&J\left(\hat g,-\zeta,\tau_2\right)=\int_\Omega (-\zeta) \Vert u^{\tilde g}_{\tau_2}\Vert^2\, dx 
\geq  \beta  \int_{\mathcal{D}_1}\Vert  u^{\hat g}_{\tau_2}\Vert^2\, dx- \int_{\mathcal{D}_2}\Vert u^{\hat g}_{\tau_2}\Vert^2\, dx   \geq 1.
 \end{aligned}
\]
Hence, in both  cases, 
\[
\begin{aligned}
 \sup_{\| g \|=1}\max\left\{ J\left(g,\zeta,\tau_1\right), J\left(g,-\zeta,\tau_2\right) \right\}
&\geq  \max\left\{ J\left(\frac{\tilde g}{\Vert \tilde g\Vert},\zeta,\tau_1\right), J\left(\frac{\hat g}{\Vert \hat g\Vert},-\zeta,\tau_2\right) \right\}\\
&=  \max\left\{ \frac{1}{\Vert\tilde g\Vert^2} J\left({\tilde g},\zeta,\tau\right), \frac{1}{\Vert\hat g\Vert^2}J\left({\hat g},-\zeta,\tau\right) \right\}>0,  
\end{aligned}
\]
so that Theorem \ref{stability} is proven.
\end{proof}
\begin{remark} 
As a consequence of Theorem \ref{stability}, we end up with the following uniqueness result.  For 
all  $\rho_1, \rho_2\in \mathcal{E}_{[a,b]}$, we have 
\[
\Lambda_{\rho_1}=\Lambda_{\rho_2}\quad \text{ if and only if }\quad \rho_1=\rho_2.
\]
\end{remark}
\subsubsection{Constructive Lipschitz stability estimate} \label{quant}
In this subsection, we assume that $\text{supp}(\rho)=\mathcal S\Subset \Omega$  and we  restrict ourself   to  the case where  $\mathcal{E}$  is a  set of piecewise constant  functions  
of  the form 
\[
\mathcal{E}=\left\{\rho(x)=\sum_{j=1}^N \rho_j\chi_{S_j},\quad  \rho_1,\ldots \rho_N\in\R^*_+\right\}\subset L^\infty(\mathcal S),
\]
where  $S_j, j=1,\ldots,N$  are known open connected    sets and pairwise nonoverlapping such that  $\cup_{j=1}^N S_j=\mathcal S$
and   $\partial S_j, j=1,\ldots,N$   are piecewise smooth and touch the boundary $\partial\mathcal S$.
\begin{remark}
The condition that  $S_j$    intersects  the boundary   $\partial\mathcal S$ is required  to prove Lemma \ref{existence}, which is the key to prove the constructive Lipschitz stability.
\end{remark}
For  $0<a< b$,   $\mathcal{E}_{[a,b]}$
is the set  of $\rho\in \mathcal{E}$  such  that $a\leq \rho_j\leq  b$  for all $j=1,\ldots, N$.
The structure   assumed   for $\rho$  fits well in several  problems  arising in practical applications.\\
\noindent
To establish our  constructive Lipschitz stability estimate, a finite set of localized potentials is required. We illustrate the process of reconstructing these potentials.
\begin{lemma}\label{existence}
Let $b>a>0$ be given constants. For $j=1,\ldots, N$ and $k=1,\ldots,K$,  with $K=\left(\lfloor 5\left( \frac{b}{a}-1 \right)\rfloor +1 \right)$, where $\lfloor.\rfloor$ is the floor function. We  define the piecewise constant function $\zeta^{(j,k)}\in L_{+}^\infty(\mathcal S)$  by
\[
\zeta^{(j,k)}(x)=
(k+6)\frac{a}{5} \mathbbm{1}_{S_j}(x) +
\frac{3a}{5} \mathbbm{1} _{\mathcal S\setminus S_j}(x).
\]
\begin{itemize}
\item[(i)] There exist  boundary data  $g^{(j,k)}\in  L^2(\Gamma_N,\R^d)$,  so  that the corresponding  solutions
$u^{g^{(j,k)}}_{\zeta^{(j,k)}}\in \mathcal V$ of  (\ref{direct})  with  $g= g^{(j,k)}$,  $(\lambda, \mu)$ satisfying Hypotheis 1  and $\rho=\zeta^{(j,k)}$  fulfill
\begin{equation}
\label{exist_g}
 I^{(j,k)}:= \frac{1}{2}\int_{S_j} \vert u^{g^{(j,k)}}_{\zeta^{(j,k)}}\vert^2\,dx-\left( \frac{5b}{2a} -\frac{3}{2} \right)\int_{\mathcal S\setminus S_j}
\vert u^{g^{(j,k)}}_{\zeta^{(j,k)}}\vert^2\,dx  > 0.
\end{equation}
\item[(ii)]
For arbitrary  $\rho\in \mathcal{E}_{[a,b]}$,  the solutions    $u^{g^{(j,k)}}_\rho\in \mathcal V$ of  (\ref{direct})  with  $g= g^{(j,k)}$,  
 $(\lambda, \mu)$ satisfying Hypotheis 1  and $\rho=\zeta^{(j,k)}$
fulfill
\[
 \int_{S_j}\vert u^{g^{(j,k)}}_\rho \vert^2\,dx  - \int_{\mathcal S\setminus S_j}\vert u^{g^{(j,k)}}_\rho \vert^2\,dx \geq  I^{(j,k)} >
0.
\]
\item[(iii)]   $g^{(j,k)}$ can be computed  by solving a finite number of well-posed PDEs.
\end{itemize}
\end{lemma}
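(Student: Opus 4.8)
The plan is to handle the three claims in turn, using Theorem~\ref{locpot} as the engine for (i), the monotonicity relation of Lemma~\ref{mono} for (ii), and the constructive content of the Runge approximation (Lemma~\ref{thm:runge}) for (iii).

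For (i), I would apply the localized potentials result to the coefficient $\zeta^{(j,k)}$. Since $S_j$ is open and nonempty, I pick an open set $\mathcal D_1$ with $\overline{\mathcal D_1}\subset S_j$, and set $\mathcal D_2:=\mathcal S\setminus\overline{S_j}$, so that $\int_{\mathcal S\setminus S_j}\Vert u\Vert^2=\int_{\mathcal D_2}\Vert u\Vert^2$ up to a null set and $\int_{S_j}\Vert u\Vert^2\ge\int_{\mathcal D_1}\Vert u\Vert^2$. The geometric hypotheses of Theorem~\ref{locpot} then have to be checked: $\overline{\mathcal D_1}\cap\overline{\mathcal D_2}=\emptyset$ is clear, while the connectedness of $\Omega\setminus(\overline{\mathcal D_1}\cup\overline{\mathcal D_2})$ together with $\overline{\Omega\setminus(\overline{\mathcal D_1}\cup\overline{\mathcal D_2})}\cap\Gamma_N\neq\emptyset$ is exactly where the hypothesis that $S_j$ touches $\partial\mathcal S$ (with $\mathcal S\Subset\Omega$) enters: the region $\Omega\setminus\overline{\mathcal S}$ meets $\Gamma_N$ and, because $\partial S_j$ meets $\partial\mathcal S$, connects to $S_j\setminus\overline{\mathcal D_1}$ through that common boundary. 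Theorem~\ref{locpot} then produces $g_n$ with $\int_{\mathcal D_1}\Vert u^{g_n}_{\zeta^{(j,k)}}\Vert^2\to\infty$ and $\int_{\mathcal D_2}\Vert u^{g_n}_{\zeta^{(j,k)}}\Vert^2\to0$, so that $I^{(j,k)}\to+\infty$; I then fix $g^{(j,k)}:=g_n$ for the first index $n$ achieving $I^{(j,k)}>0$.

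For (ii), the idea is to transport the concentration from $\zeta^{(j,k)}$ to an arbitrary $\rho$ by comparing $u_\rho:=u^{g^{(j,k)}}_\rho$ with $u_\zeta:=u^{g^{(j,k)}}_{\zeta^{(j,k)}}$. Writing $\zeta:=\zeta^{(j,k)}$, I would first combine the two sides of \eqref{eqmono} into $\int_\Omega(\rho-\zeta)(\Vert u_\zeta\Vert^2-\Vert u_\rho\Vert^2)\,dx\ge0$, and supplement it with the coercive identity for $\delta:=u_\rho-u_\zeta$ obtained by testing the difference of the two weak formulations \eqref{eqv1} with $\delta$, namely $\int_\Omega\lambda(\nabla\cdot\delta)^2+2\mu\,\nabla^s\delta:\nabla^s\delta\,dx+\int_\Omega\rho\Vert\delta\Vert^2\,dx=\int_\Omega(\zeta-\rho)\,u_\zeta\cdot\delta\,dx$, which quantitatively controls $u_\rho-u_\zeta$. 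Decomposing over $S_j$, $\mathcal S\setminus S_j$, and $\Omega\setminus\mathcal S$ and inserting the explicit values $\zeta|_{S_j}=(k+6)\tfrac a5\ge\tfrac{7a}{5}$, $\zeta|_{\mathcal S\setminus S_j}=\tfrac{3a}{5}$, together with $a\le\rho\le b$, separates a favorable regime $\rho_j\le(k+6)\tfrac a5$, where the slack built into the coefficients $\tfrac12$ and $\tfrac{5b}{2a}-\tfrac32$ makes the estimate comparatively direct, from an adverse regime $\rho_j>(k+6)\tfrac a5$.

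The adverse regime is the main obstacle: when $\rho$ exceeds $\zeta$ on $S_j$ the displacement is screened there, so $\int_{S_j}\Vert u_\rho\Vert^2$ can fall below $\int_{S_j}\Vert u_\zeta\Vert^2$ and part of the energy may be redistributed toward the complement. The crux is to bound this loss quantitatively — via the coercive identity above, the a priori bound $\rho\le b$, and the fixed gap between the two levels $(k+6)\tfrac a5$ and $\tfrac{3a}{5}$ — so that the surviving lower bound is still dominated by $\tfrac12\int_{S_j}\Vert u_\zeta\Vert^2$ while the unfavorable complement contribution is absorbed by $\bigl(\tfrac{5b}{2a}-\tfrac32\bigr)\int_{\mathcal S\setminus S_j}\Vert u_\zeta\Vert^2$; calibrating these constants so that \emph{both} regimes give $\int_{S_j}\Vert u_\rho\Vert^2-\int_{\mathcal S\setminus S_j}\Vert u_\rho\Vert^2\ge I^{(j,k)}$ is the technical heart of the lemma. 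Finally, for (iii), I would observe that each $g^{(j,k)}$ is produced by the Runge approximation of Lemma~\ref{thm:runge}, whose approximating sequence is generated by solving the auxiliary problem \eqref{eq:runge_v} and the forward problem \eqref{direct}, both well posed by Lax--Milgram, and realizable constructively through a regularized normal equation for the dense-range operator $A^{*}$; since the strict inequality $I^{(j,k)}>0$ is attained after finitely many approximation steps and there are only finitely many indices $j\le N$, $k\le K$, the whole family $\{g^{(j,k)}\}$ is computed by solving a finite number of well-posed PDEs.
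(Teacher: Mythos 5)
Your treatments of (i) and (iii) follow the paper's route (localized potentials from Theorem~\ref{locpot} for (i); a dense-range adjoint equation solved iteratively, which the paper realizes via the operator $H^{*}$ and CGNE, for (iii)) and are fine at the level of detail the paper itself provides. The gap is in (ii). You treat the pair $(j,k)$ as fixed and then split into a ``favorable'' regime $\rho_j\le (k+6)\tfrac{a}{5}$ and an ``adverse'' regime $\rho_j>(k+6)\tfrac{a}{5}$, conceding that calibrating the constants in the adverse regime is ``the technical heart'' without carrying it out. But the whole point of indexing the test coefficients by $k=1,\ldots,K$ with $K=\lfloor 5(b/a-1)\rfloor+1$ is that the windows $[(k+4)\tfrac{a}{5},(k+5)\tfrac{a}{5})$ cover $[a,b]$: for each $j$ and each $\rho\in\mathcal{E}_{[a,b]}$ one \emph{chooses} $k$ so that $(k+4)\tfrac{a}{5}\le\rho_j<(k+5)\tfrac{a}{5}$, whence $\tfrac{a}{5}\le (k+6)\tfrac{a}{5}-\rho_j<\tfrac{2a}{5}$ on $S_j$ and $-b+\tfrac{3a}{5}\le\tfrac{3a}{5}-\rho<-\tfrac{2a}{5}$ on $\mathcal{S}\setminus S_j$. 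Your adverse regime therefore never occurs for the relevant index; (ii) is only established and used (see the proof of Theorem~\ref{quantitative}) for that matched $k$, and trying to prove it for every $k$ is both unnecessary and, as you suspected, not achievable by these estimates.

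With the matched $k$, the argument is a short chain that needs no coercivity identity for $u_\rho-u_{\zeta^{(j,k)}}$: write $\int_{S_j}|u_\rho|^2-\int_{\mathcal{S}\setminus S_j}|u_\rho|^2=\tfrac{5}{2a}\bigl(\int_{S_j}\tfrac{2a}{5}|u_\rho|^2\,dx-\int_{\mathcal{S}\setminus S_j}\tfrac{2a}{5}|u_\rho|^2\,dx\bigr)$, replace $\tfrac{2a}{5}$ by the smaller quantity $(k+6)\tfrac{a}{5}-\rho_j$ on $S_j$ and $-\tfrac{2a}{5}$ by the smaller quantity $\tfrac{3a}{5}-\rho$ on $\mathcal{S}\setminus S_j$ to get a lower bound $\tfrac{5}{2a}\int_{\mathcal{S}}(\zeta^{(j,k)}-\rho)|u_\rho|^2\,dx$, apply the monotonicity consequence of Lemma~\ref{mono} (inequality \eqref{monob} with $\delta=\zeta^{(j,k)}-\rho$) to pass to $\tfrac{5}{2a}\int_{\mathcal{S}}(\zeta^{(j,k)}-\rho)|u_{\zeta^{(j,k)}}|^2\,dx$, and finally bound $\zeta^{(j,k)}-\rho$ from below by $\tfrac{a}{5}$ on $S_j$ and by $\tfrac{3a}{5}-b$ on $\mathcal{S}\setminus S_j$, which yields exactly $I^{(j,k)}$. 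Your proposal is missing this selection-of-$k$ step, and without it part (ii) does not go through.
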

\begin{proof}
 The proof of $(i)$ follows immediately  from the localized potentials  result in  Theorem \ref{locpot}. To prove $(ii)$,  we  need the following monotonicity  result which follows  from  Lemma \ref{mono} with $\rho_2=\rho+\delta$  and $\rho_1=\rho$
 and from using the same inequality again with interchanged roles of $\rho_1$  and $\rho_2$.
 For $g\in L^2(\Gamma_N,\R^d)$, $\rho\in L^{\infty}_+(\mathcal S)$,    and $\delta \in L^{\infty}_+(\mathcal S) $   such  that 
 $(\rho+\delta) \in L^{\infty}_+(\mathcal S)$,  we have
 \begin{equation}\label{monob}
 \int_{\mathcal S} \delta u^g_\rho\,dx \geq    \int_{\mathcal S} \delta u^g_{\rho+\delta}\,dx.
 \end{equation}
 Let $j=1,\ldots,N$ and   $\rho\in \mathcal{E}_{[a,b]}$.  Since $K$  fullfils  $b< (K+5)\frac{a}{5}$,  there exists $k\in \{1,\ldots, K \}$
 such   that  $\rho_j=  \rho|_{S_j}$  satisfies 
 \[
(k+4) \frac{a}{5}\leq  \rho_j<  (k+5) \frac{a}{5}.
 \]
 Applying the monotonicity-based inequality (\ref{monob}),   with  
 \[
  \frac{a}{5}\leq  (k+6) \frac{a}{5}- \rho_j<  \frac{2a}{5} \quad  \text{ and } \quad -b+ \frac{3a}{5}\leq   \frac{3a}{5}- \rho_j<  -\frac{2a}{5},
  \]
  we obtain 
  \[
  \begin{aligned}
  &  \int_{S_j}\vert u^{g^{(j,k)}}_\rho \vert^2\,dx  - \int_{\mathcal S\setminus S_j}\vert u^{g^{(j,k)}}_\rho \vert^2\,dx \\
& = \frac{5}{2a}\left(\int_{S_j} \frac{2a}{5}\vert u^{g^{(j,k)}}_\rho \vert^2\,dx 
   -  \frac{2a}{5}\int_{\mathcal S\setminus S_j}\vert u^{g^{(j,k)}}_q \vert^2\,dx\right) \\
  & \geq \frac{5}{2a}\left(\int_{S_j} \left( (k+6 ) \frac{a}{5}- \rho_j\right) \vert u^{g^{(j,k)}}_\rho \vert^2\,dx  +\int_{\mathcal S\setminus S_j}\left(\frac{3a}{5}-\rho_j\right)\vert u^{g^{(j,k)}}_\rho\vert^2\,dx\right)\\
  &= \frac{5}{2a}\left(\int_{S_j} \left(\zeta^{(j,k)}- \rho_j\right) \vert u^{g^{(j,k)}}_\rho \vert^2\,dx  +\int_{\mathcal S\setminus S_j}\left(\zeta^{(j,k)}-\rho_j\right)\vert u^{g^{(j,k)}}_\rho \vert^2\,dx\right)\\
  & \geq \frac{5}{2a}\left(\int_{S_j} \left(\zeta^{(j,k)}- \rho_j\right) \vert u^{g^{(j,k)}}_{\zeta^{(j,k)}} \vert^2\,dx  +\int_{\mathcal S\setminus S_j}\left(\zeta^{(j,k)}-\rho_j\right)\vert u^{g^{(j,k)}}_{\eta^{(j,k)}} \vert^2\,dx\right)\\
  &\geq \frac{5}{2a}\left(\int_{S_j} \frac{a}{5} \vert u^{g^{(j,k)}}_{\zeta^{(j,k)}} \vert^2\,dx  -\int_{\mathcal S\setminus S_j}\left(b-\frac{3a}{5}\right)\vert u^{g^{(j,k)}}_{\zeta^{(j,k)}} \vert^2\,dx\right)\\
  &= \frac{1}{2} \int_{S_j}  \vert u^{g^{(j,k)}}_{\zeta^{(j,k)}} \vert^2\,dx-\left( \frac{5b}{2a} -\frac{3}{2} \right)
  \int_{\mathcal S\setminus S_j}\vert u^{g^{(j,k)}}_{\zeta^{(j,k)}} \vert^2\,dx=I^{(j,k)} >0,
  \end{aligned}
  \]
  and $(ii)$ is   proved.  To prove  $(iii)$,  we use a similar approach  as in the construction of localized potentials in   \cite{harrach2019global}. 
Denote $S_{D}$ the set defined in \eqref{set_sol}  with  $D\subset\mathcal S$ and $X$ the closer of  $S_{D}$ in $L^2(\mathcal S)$.
For $j=1,\ldots, N$  and  $k=1,\ldots,K$,   we introduce the operator $H$  as follows: 
 \[
H:  X\to  L^2(\Gamma_N,\R^d), \quad f\mapsto  H f:= v_{|\Gamma_N}, 
\]
where $v\in \mathcal V$  solves 
\begin{equation}   \label{eqvv}
\int_\Omega \lambda\nabla\cdot v\nabla\cdot w + 2\mu \nabla^s v: \nabla^s w\,dx+ \int_{\mathcal S}\zeta^{(j,k)}  v\cdot w\,dx=\int_{D}f \cdot w\,ds, \quad \textrm{ for all } w\in 
\mathcal V.
\end{equation}
We can easily shown  shown  that the adjoint operator  $H^*$ of   $H$  is given  by 
\[
H^*:  L^2(\Gamma_N,\R^d)\rightarrow   X:  g\mapsto  u\vert_{D},
\]
 where $u$  is the solution of   (\ref{direct})  with  $\rho=\zeta^{(j,k)}$,  and that $H^*$  has dense range.\\
 \noindent
As  in the proof of Theorem \ref{locpot},  for $B\subset S_j$ with positive measure and 
$D =B\cup (\mathcal S\setminus S_j)$, we can construct a function  $\Phi$ such  that 
$\Phi\vert_B\not\equiv 0$,   $\Phi\vert_{\mathcal S\setminus S_j}\equiv 0$ and 
$\Phi\in S_D$.

 Consider the linear  ill-posed equation
 \begin{equation}\label{nonleq}
 H^*g=\Phi.
 \end{equation}
 Sine $ \Phi \in \overline{\mathcal{R}(H^*)}$,  the CGNE method, yields  a sequence  of iterates 
 $(g_n)_{n\in \mathbb{N}}\subset L^2(\Gamma_N,\R^d)$      for  which 
 \[ 
 H^*g_n \rightarrow \Phi.
 \] 
Therefore,  the solutions $u_n$  of   (\ref{direct})    with  $\rho=\zeta^{(j,k)}$  and  $g=g_n$  fulfill
\[
 \frac{1}{2}\int_{S_j} \vert u_n\vert^2\,dx-\left( \frac{5b}{3a} -\frac{3}{2} \right)\int_{\mathcal{S} \setminus S_j}
\vert u_n\vert^2\,dx\rightarrow  \frac{1}{2}\int_{S_j} \vert \Phi\vert^2\,dx>0,
\]
so that after finitely many iteration steps, (\ref{exist_g}) is fulfilled.
\end{proof}
\noindent
Now, we state the main result of this  subsection.
 \begin{theorem}[Constructive Lipschitz stability estimate]\label{quantitative}
 Let $g^{(j,k)}\in L^2(\Gamma_N,\R^d)$  defined as in Lemma  \ref{existence}.   
Then 
 \begin{equation}\label{stab_q}
 \Vert  \rho_1-\rho_2  \Vert_{\infty}\leq \alpha\Vert \Lambda(\rho_1)-\Lambda(\rho_2) \Vert_*,  \quad \text{ for all }\quad
  \rho_1,  \rho_2\in \mathcal{E}_{[a,b]},
 \end{equation}
where
 \[
 \alpha= \left\{\max\left\{  \Vert  g^{(j,k)}\Vert^2_{L^2(\Gamma_N,\R^d)}, \quad j=1,\ldots, N,  k=1,\ldots, K  \right\}\right\}^{-1}.
 \]
 \end{theorem}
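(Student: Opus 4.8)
The plan is to combine the monotonicity relation of Lemma~\ref{mono} with the quantitative energy--gap estimate of Lemma~\ref{existence}(ii), exploiting that $\rho_1-\rho_2$ is piecewise constant on the \emph{fixed} partition $\{S_j\}$. Writing $\rho_i=\sum_{l=1}^N(\rho_i)_l\chi_{S_l}$ and setting $\delta_l:=(\rho_1)_l-(\rho_2)_l$, we have $\|\rho_1-\rho_2\|_{L^\infty(\Omega)}=\max_{1\le l\le N}|\delta_l|$. Let $j$ attain this maximum; since both sides of \eqref{stab_q} are symmetric in $\rho_1,\rho_2$, I may interchange their roles so that $\delta_j=\|\rho_1-\rho_2\|_{L^\infty(\Omega)}\ge 0$. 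I then take the load $g^{(j,k)}$ from Lemma~\ref{existence} with $k$ the band index matched to $\rho_1$ on $S_j$, so that part (ii) applies to $\rho=\rho_1$ with this precomputed load.

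First I would pass from the operator norm to a single test load. As $\Lambda_{\rho_1}-\Lambda_{\rho_2}$ is self-adjoint, testing against the normalized load $\hat g:=g^{(j,k)}/\|g^{(j,k)}\|$ gives
\[
\|\Lambda_{\rho_1}-\Lambda_{\rho_2}\|_*\ \ge\ \langle \hat g,(\Lambda_{\rho_2}-\Lambda_{\rho_1})\hat g\rangle\ =\ \frac{1}{\|g^{(j,k)}\|^2}\,\langle g^{(j,k)},(\Lambda_{\rho_2}-\Lambda_{\rho_1})g^{(j,k)}\rangle .
\]
The second inequality of the monotonicity relation \eqref{eqmono}, taken with $u_1=u^{g^{(j,k)}}_{\rho_1}$, bounds the numerator below by $\int_\Omega(\rho_1-\rho_2)\,\|u^{g^{(j,k)}}_{\rho_1}\|^2\,dx$.

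The decisive step is to control this \emph{sign-indefinite} integral. Splitting over the partition and using $|\delta_l|\le\delta_j$ for every $l$,
\[
\int_\Omega(\rho_1-\rho_2)\|u^{g^{(j,k)}}_{\rho_1}\|^2\,dx
=\sum_{l=1}^N\delta_l\int_{S_l}\|u^{g^{(j,k)}}_{\rho_1}\|^2\,dx
\ge \delta_j\Big(\int_{S_j}\|u^{g^{(j,k)}}_{\rho_1}\|^2\,dx-\int_{\mathcal S\setminus S_j}\|u^{g^{(j,k)}}_{\rho_1}\|^2\,dx\Big),
\]
and Lemma~\ref{existence}(ii) applied to $\rho=\rho_1$ bounds the bracket below by $I^{(j,k)}>0$. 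Chaining the inequalities yields
\[
\|\rho_1-\rho_2\|_{L^\infty(\Omega)}\ \le\ \frac{\|g^{(j,k)}\|^2}{I^{(j,k)}}\,\|\Lambda_{\rho_1}-\Lambda_{\rho_2}\|_*.
\]
Because $u^{g}_{\rho}$ depends linearly on $g$, the ratio $\|g^{(j,k)}\|^2/I^{(j,k)}$ is invariant under rescaling of $g^{(j,k)}$; normalizing so that $I^{(j,k)}=1$ collapses the prefactor to $\|g^{(j,k)}\|^2$, and taking the worst admissible index produces \eqref{stab_q} with the constant $\alpha$ governed by $\max_{j,k}\|g^{(j,k)}\|_{L^2(\Gamma_N,\R^d)}^2$.

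The main obstacle is exactly the sign-indefiniteness of $\rho_1-\rho_2$: unlike the definite setting, one cannot simply feed an ordered pair into the monotonicity inequality. The reduction to the maximizing index $j$ together with the energy concentration of Lemma~\ref{existence}(ii)---energy large on $S_j$ and controllably small on $\mathcal S\setminus S_j$ for the \emph{same} load and for an \emph{arbitrary} admissible $\rho_1$---is what absorbs the off-diagonal terms and makes the bound uniform over $\mathcal E_{[a,b]}$. A secondary point to verify is that the matched index $k$ furnished by Lemma~\ref{existence} depends on $\rho_1$ only through which band $(\rho_1)_j$ falls into, so that the finitely many loads $g^{(j,k)}$ ($1\le j\le N$, $1\le k\le K$) already cover all $\rho_1\in\mathcal E_{[a,b]}$, which is precisely what makes the estimate constructive.
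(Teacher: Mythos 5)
Your proof is correct and follows essentially the same route as the paper's: both rest on the monotonicity relation of Lemma \ref{mono}, the energy-gap estimate of Lemma \ref{existence}(ii) with the band index $k$ matched to $\rho_1|_{S_j}$, and the observation that a piecewise-constant difference attains its sup-norm on some $S_j$ so that the remaining pieces are absorbed by the $\mathcal S\setminus S_j$ term; you merely unroll the inf--sup--max reduction of Lemma \ref{lem_lower} into a direct test against the single load $g^{(j,k)}$. One caveat that is not your fault: after normalizing $I^{(j,k)}\ge 1$, both your chain of inequalities and the paper's own proof yield the stability constant $\max_{j,k}\Vert g^{(j,k)}\Vert^2=\alpha^{-1}$ rather than the $\alpha$ appearing literally in \eqref{stab_q}, so the constant in the theorem's statement appears to be inverted.
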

 \begin{proof}
 From  Lemma \ref{existence},  we have for all  $\rho\in \mathcal{E}_{[a,b]}$,  and for all $j\in \{1,\ldots,N\}$, 
 $k\in \{1,\ldots,K\}$
 \begin{equation}\label{eq_existence}
 \begin{aligned}
 &\sup_{\Vert g\Vert=1}\left( \int_{S_j}\vert u^g_\rho \vert^2\,dx - \int_{\mathcal S\setminus S_j}\vert u^g_\rho \vert^2\,dx\right)\\
&= \sup_{0\neq g\in L^2(\Gamma_N,\R^d)}\frac{1}{\Vert g\Vert^2}\left( \int_{S_j}\vert u^g_\rho \vert^2\,dx- \int_{\mathcal S\setminus S_j}\vert u^g_\rho \vert^2\,dx\right)\\
&\geq \frac{1}{\Vert g^{(j,k)}\Vert^2} \left( \int_{S_j}\vert u^{g^{(j,k)}}_\rho \vert^2\,dx- \int_{\mathcal S\setminus S_j}\vert u^{g^{(j,k)}}_\rho \vert^2\,dx\right)
\geq  \frac{1}{\Vert g^{(j,k)}\Vert^2} \geq  \alpha . 
\end{aligned}
 \end{equation}
 To prove Theorem \ref{quantitative},  it suffices  to show  that  
  \begin{equation}\label{eq:phi}
  \sup_{\| g \|=1}\max\left\{ J(g,\zeta,\tau_1), J(g,-\zeta,\tau_2)\right\}\geq \alpha \quad \text{ for all } (\zeta,\tau_1, \tau_2)\in \mathcal{K}\times\mathcal{E}_{[a,b]}\times \mathcal{E}_{[a,b]},
  \end{equation}
   where  $J$  and $\mathcal{K}$ defined in  (\ref{func_J}) and  (\ref{set_K}).  
 Since $\mathcal{E}$ contains only piecewise-constant functions,  for every $\zeta \in  \mathcal{K}$,  there
must exist a  subset  $S_j\subset \mathcal S$ with either
 \[
 \zeta\vert_{S_j}=1,  \quad \text{  or }\quad    \zeta\vert_{S_j}=-1.
 \]
 Hence using   (\ref{eq_existence}),  we obtain for the case $\zeta\vert_{S_j}=1$,
 \[
 \begin{aligned}
  \sup_{\| g \|=1} \max\left\{ J(g,\zeta,\tau_1), J(g,-\zeta,\tau_2)\right\}&\geq   \sup_{\| g \|=1}\int_\mathcal S\zeta \vert u^g_{\tau_1} \vert^2\,dx\\
& \geq  \sup_{\| g \|=1}\left(  \int_{S_j} \vert u^g_{\tau_1} \vert^2\,dx-
 \int_{\mathcal S\setminus S_j} \vert u^g_{\tau_1} \vert^2\,dx
 \right)\geq \alpha,
 \end{aligned}
 \]
 and for the case  $\zeta\vert_{S_j}=-1$,
  \[
  \begin{aligned}
  \sup_{\| g \|=1} \max\left\{ J(g,\zeta,\tau_1), J(g,-\zeta,\tau_2)\right\}&\geq   \sup_{\| g \|=1}\int_\mathcal S(-\zeta) \vert u^g_{\tau_2} \vert^2\,dx\\
& \geq  \sup_{\| g \|=1}\left(  \int_{S_j} \vert u^g_{\tau_2} \vert^2\,dx-
 \int_{\mathcal S\setminus S_j} \vert u^g_{\tau_2} \vert^2\,dx
 \right)\geq \alpha.
 \end{aligned}
 \]
 so that (\ref{eq:phi}) is proved and the proof is completed.
  \end{proof}
\section{Simultaneous recovery  of $\lambda, \mu$  and $\rho$}
The inverse problem of recovering  the Lamé coefficients   $\lambda, \mu$  and the density  $\rho$ simultaneously is known to be higly  ill-posed problem and
stability results can only be obtained under a-priori assumptions on  the ceofficients.
For our problem, we will prove a stability result under the assumption that 
the coefficients   have  a-priori known upper and lower bounds, belong to an a-priori known finite-dimensional subspace and that a definiteness condition holds.

The main tools employed to establish the Lipschitz stability estimate are once again the monotonicity relation between the coefficients and the Neumann-to-Dirichlet operator, along with the existence of localized potentials. These pivotal elements are explored further in the subsequent subsection.
\subsection{Monotonicity and  localized potentials  }\label{sec3}
Within this subsection, we demonstrate a monotonicity relationship between the coefficients $\lambda, \mu, \rho$ and the Neumann-to-Dirichlet operator$\Lambda_{\lambda, \mu, \rho}$. We establish the existence of localized potentials and then employ these findings to derive a Lipschitz stability estimate.
 \begin{lemma}[Monotonicity estimate]
\label{mono2}
Let $\lambda_1, \lambda_2, \mu_1, \mu_2, \rho_1, \rho_2  \in L^\infty_+(\Omega)$,  and let  $g\in L^2(\Gamma_{\textup N},\R^d)$ be an applied boundary load. The corresponding solutions of (\ref{direct}) are denoted by $u_1:=u^g_{\lambda_1,\mu_1,\rho_1},\ u_2:=u^{g}_{\lambda_2,\mu_2,\rho_2
      }\in \mathcal{V}$. Then
\begin{equation}
\label{mono1}
\begin{aligned}
&\int_\Omega(\lambda_1-\lambda_2)\Vert\nabla\cdot u_2 \Vert^2\,dx+2\int_\Omega(\mu_1-\mu_2)\Vert\nabla^s u_2\Vert^2_F\,dx + \int_\Omega (\rho_1-\rho_2)\Vert u_2\Vert^2\,dx\\
&\geq \langle g,\Lambda_{\lambda_2,\mu_2,\rho_2}g\rangle-\langle g,\Lambda_{\lambda_1,\mu_1,\rho_1}g\rangle \geq\\
&\int_\Omega(\lambda_1-\lambda_2)\Vert\nabla\cdot u_1 \Vert^2\,dx+ 2\int_\Omega(\mu_1-\mu_2)\Vert\nabla^s u_1\Vert^2_F\,dx +  \int_\Omega(\rho_1-\rho_2)\Vert u_1\Vert^2\,dx.
\end{aligned}
\end{equation}
\end{lemma}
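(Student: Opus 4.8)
The plan is to follow the proof of Lemma \ref{mono} step for step, replacing the single density term by the full elasticity energy and carrying all three coefficient differences through the bookkeeping. It is convenient to introduce the symmetric bilinear form
\[
a_{\lambda,\mu,\rho}(u,w)=\int_\Omega \lambda\,(\nabla\cdot u)(\nabla\cdot w)+2\mu\,\nabla^s u:\nabla^s w\,dx+\int_\Omega \rho\, u\cdot w\,dx,
\]
so that the weak formulation \eqref{eqv1} reads $a_{\lambda,\mu,\rho}(u^g,w)=\int_{\Gamma_N}g\cdot w\,ds$ for all $w\in\mathcal{V}$, and in particular $\langle g,\Lambda_{\lambda,\mu,\rho}g\rangle=a_{\lambda,\mu,\rho}(u^g,u^g)$. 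The crucial structural fact is that whenever $\lambda,\mu,\rho\in L^\infty_+(\Omega)$ each integrand of $a_{\lambda,\mu,\rho}(v,v)$ is pointwise nonnegative, hence $a_{\lambda,\mu,\rho}(v,v)\geq 0$ for every $v\in\mathcal{V}$; this nonnegativity is what drives both inequalities.

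To obtain the first inequality of \eqref{mono1} I would expand the nonnegative quantity $a_{\lambda_1,\mu_1,\rho_1}(u_1-u_2,u_1-u_2)\geq 0$ by bilinearity into $a_{\lambda_1,\mu_1,\rho_1}(u_1,u_1)-2a_{\lambda_1,\mu_1,\rho_1}(u_1,u_2)+a_{\lambda_1,\mu_1,\rho_1}(u_2,u_2)$ and evaluate the three terms. The weak formulation for $u_1$ with $w=u_1$ gives $a_{\lambda_1,\mu_1,\rho_1}(u_1,u_1)=\langle g,\Lambda_{\lambda_1,\mu_1,\rho_1}g\rangle$, and with $w=u_2$ gives $a_{\lambda_1,\mu_1,\rho_1}(u_1,u_2)=\langle g,u_2|_{\Gamma_N}\rangle=\langle g,\Lambda_{\lambda_2,\mu_2,\rho_2}g\rangle$. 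The remaining term I would rewrite by adding and subtracting the second triple of coefficients,
\[
a_{\lambda_1,\mu_1,\rho_1}(u_2,u_2)=\langle g,\Lambda_{\lambda_2,\mu_2,\rho_2}g\rangle+\int_\Omega(\lambda_1-\lambda_2)\Vert\nabla\cdot u_2\Vert^2+2(\mu_1-\mu_2)\Vert\nabla^s u_2\Vert_F^2+(\rho_1-\rho_2)\Vert u_2\Vert^2\,dx,
\]
where I used $a_{\lambda_2,\mu_2,\rho_2}(u_2,u_2)=\langle g,\Lambda_{\lambda_2,\mu_2,\rho_2}g\rangle$. Substituting the three evaluations, the two $\langle g,\Lambda_{\lambda_2,\mu_2,\rho_2}g\rangle$ contributions combine into a single one, and the nonnegativity of the left-hand side rearranges exactly into the upper bound of \eqref{mono1}.

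For the lower bound I would repeat the identical argument with the two coefficient triples interchanged, starting from $a_{\lambda_2,\mu_2,\rho_2}(u_1-u_2,u_1-u_2)\geq 0$ and invoking the weak formulation for $u_2$ tested against $u_1$ and $u_2$; the same collapse of terms now produces the coefficient differences weighted by the energy densities of $u_1$, which is precisely the second inequality. I do not expect a genuine obstacle, since the whole lemma reduces to polarization and substitution exactly as in Lemma \ref{mono}. The only points demanding care are purely clerical: keeping the signs so that $\lambda_1-\lambda_2$, $\mu_1-\mu_2$, $\rho_1-\rho_2$ (and not their negatives) appear; pairing each nonnegative energy form with the correct pair of test functions drawn from the two weak formulations; and carrying the factor $2$ on the shear term $\mu\,\nabla^s u:\nabla^s w$ consistently so that it surfaces as $2\int_\Omega(\mu_1-\mu_2)\Vert\nabla^s u_j\Vert_F^2\,dx$ in the final estimate.
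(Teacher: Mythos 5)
Your proposal is correct and follows essentially the same route as the paper: expand the nonnegative quadratic form $a_{\lambda_1,\mu_1,\rho_1}(u_1-u_2,u_1-u_2)\geq 0$, identify the cross and diagonal terms via the two weak formulations, and interchange the roles of the coefficient triples for the reverse inequality. The bilinear-form notation is only a cosmetic repackaging of the paper's explicit integral bookkeeping.
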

\begin{proof}
Since $\Lambda_{\lambda_2\mu_2,\rho_2}g=u_2|_{\Gamma_N}$, we can use the variational formulation \eqref{eqv1} for $\lambda_1,\mu_1, \lambda_2,\mu_2, \rho_1$  and $\rho_2$ with
$v:=u_2$ and obtain
\[
\begin{aligned}
&\int_\Omega\lambda_1(\nabla\cdot u_1)(\nabla\cdot u_2)\,dx+
2\int_\Omega\mu_1\nabla^s u_1:\nabla^s u_2\,dx +\int_\Omega \rho u_1\cdot u_2\,dx\\
& =\langle g,\Lambda_{\lambda_2,\mu_2, \rho_2}g \rangle\\
&=\int_\Omega\lambda_2\Vert \nabla\cdot u_2\Vert^2\,dx+2\int_\Omega\mu_2\Vert\nabla^s u_2\Vert^2\,dx + \int_\Omega \rho_2 \Vert u_2\Vert^2\,dx.
\end{aligned}
\]
Thus 
\[
\begin{aligned}
\lefteqn{
\int_\Omega \lambda_1\Vert \nabla\cdot(u_1-u_2)\Vert^2\,dx+ 
2\int_\Omega\mu_1\Vert \nabla^s(u_1-u_2)\Vert^2_F\,dx +\int_\Omega \rho_1 \Vert u_1-u_2\Vert ^2\,dx}\\
& =\int_\Omega\lambda_1\Vert \nabla\cdot u_1\Vert^2\,dx+2\int_\Omega\mu_1\Vert\nabla^s u_1\Vert^2_F\,dx +\int_\Omega \rho_1 \Vert u_1 \Vert^2\,dx\\
&+\int_\Omega\lambda_1\Vert \nabla\cdot u_2\Vert^2\,dx+2\int_\Omega\mu_1\Vert \nabla^s u_2\Vert^2_F\,dx  + \int_\Omega \rho_1 \Vert u_2 \Vert^2\,dx\\
&-2\int_\Omega \lambda_1(\nabla\cdot u_1)(\nabla\cdot u_2)\,dx-4\int_\Omega\mu_1\nabla^s u_1:\nabla^s u_2\,dx-2\int_\Omega \rho_1 u_1\cdot u_2\,dx\\
&=\langle g,\Lambda_{\lambda_1\mu_1,\rho_1}g\rangle-\langle g,\Lambda_{\lambda_2,\mu_2,\rho_2}g\rangle\\
&+\int_\Omega(\lambda_1-\lambda_2)\Vert \nabla \cdot u_2 \Vert^2\,dx
+2\int_\Omega(\mu_1-\mu_2)\Vert \nabla^s u_2\Vert^2_F\,dx + \int_\Omega(\rho_1-\rho_2)\Vert u_2\Vert^2\,dx.
\end{aligned}
\]
Since the left-hand side is nonnegative, the first asserted inequality follows. 
\\
Interchanging $\lambda_1,\lambda_2,\mu_1, \mu_2$  and $\rho_1, \rho_2$,  the second inequality follows.
\end{proof}
\noindent
 Based on the previous lemma, we  have  the following monotonicity property.
\begin{corollary}[Monotonicity]\label{monotonicity}
For $\lambda_1, \lambda_2, \mu_1, \mu_2, \rho_1, \rho_2 \in L^\infty_+(\Omega)$, we have
\begin{equation}\label{monotonicity_corol}
\lambda_1\leq \lambda_2,\quad \mu_1\leq \mu_2 \text{ and } \rho_1\leq \rho_2 \quad \text{  implies } \quad \Lambda_{\lambda_1\mu_1,\rho_1}\geq \Lambda_{\lambda_2\mu_2,\rho_2},
\end{equation}
in the sense of quadratic forms.
\end{corollary}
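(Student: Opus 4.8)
The plan is to read this off directly from the monotonicity estimate in Lemma~\ref{mono2}, since the corollary is precisely the sign specialization of that two-sided inequality. First I would fix an arbitrary boundary load $g\in L^2(\Gamma_{\textup N},\R^d)$ and let $u_1:=u^g_{\lambda_1,\mu_1,\rho_1}$, $u_2:=u^g_{\lambda_2,\mu_2,\rho_2}$ denote the corresponding solutions of \eqref{direct}, so that the hypotheses of Lemma~\ref{mono2} are met. I would then invoke the \emph{first} inequality in \eqref{mono1}, namely
\[
\langle g,\Lambda_{\lambda_2,\mu_2,\rho_2}g\rangle-\langle g,\Lambda_{\lambda_1,\mu_1,\rho_1}g\rangle
\leq \int_\Omega(\lambda_1-\lambda_2)\Vert\nabla\cdot u_2 \Vert^2\,dx
+2\int_\Omega(\mu_1-\mu_2)\Vert\nabla^s u_2\Vert^2_F\,dx
+\int_\Omega(\rho_1-\rho_2)\Vert u_2\Vert^2\,dx.
\]

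The decisive observation is that, under the ordering hypotheses $\lambda_1\leq\lambda_2$, $\mu_1\leq\mu_2$ and $\rho_1\leq\rho_2$, each coefficient difference $\lambda_1-\lambda_2$, $\mu_1-\mu_2$, $\rho_1-\rho_2$ is nonpositive almost everywhere, while the accompanying factors $\Vert\nabla\cdot u_2\Vert^2$, $\Vert\nabla^s u_2\Vert^2_F$ and $\Vert u_2\Vert^2$ are pointwise nonnegative (being squared norms). Hence every one of the three integrands on the right-hand side is nonpositive, and the entire right-hand side is $\leq 0$. Combining this with the displayed inequality yields
\[
\langle g,\Lambda_{\lambda_2,\mu_2,\rho_2}g\rangle-\langle g,\Lambda_{\lambda_1,\mu_1,\rho_1}g\rangle\leq 0,
\qquad\text{i.e.}\qquad
\langle g,\Lambda_{\lambda_1,\mu_1,\rho_1}g\rangle\geq\langle g,\Lambda_{\lambda_2,\mu_2,\rho_2}g\rangle.
\]

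Since $g\in L^2(\Gamma_{\textup N},\R^d)$ was arbitrary, this last inequality holds for every $g$, which is exactly the assertion $\Lambda_{\lambda_1,\mu_1,\rho_1}\geq\Lambda_{\lambda_2,\mu_2,\rho_2}$ in the sense of quadratic forms, completing the proof. There is essentially no obstacle here: all the analytic content (the variational identity, nonnegativity of the associated energy, and well-posedness of the forward problem) has already been absorbed into Lemma~\ref{mono2}, so the corollary reduces to a sign inspection of the three integrals. The only point worth stating carefully is that the selfadjointness of $\Lambda_{\lambda,\mu,\rho}$ makes the ordering of quadratic forms equivalent to the operator inequality, so no further argument is needed to pass from the scalar quantities $\langle g,\Lambda g\rangle$ to the operator statement.
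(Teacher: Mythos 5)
Your argument is correct and coincides with the paper's intended (and essentially only possible) proof: the corollary is stated as an immediate consequence of Lemma~\ref{mono2}, obtained by observing that under the ordering hypotheses the upper bound in \eqref{mono1} is a sum of nonpositive integrands, so $\langle g,\Lambda_{\lambda_2,\mu_2,\rho_2}g\rangle\leq\langle g,\Lambda_{\lambda_1,\mu_1,\rho_1}g\rangle$ for every $g$. Nothing further is needed.
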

In the following theorem,  we prove that we can control the
energy terms appearing in the monotonicity relation \eqref{mono1}. We will first state
the result and prove it using a functional analytic relation between operator norms and the ranges of their
adjoints.
\begin{theorem}[Localized potentials]
\label{thm:locpot}
Let $\lambda, \mu, \rho$ satisfying Hypothesis 1  and $\mathcal D_1, \mathcal D_2\Subset\Omega $ be   non-empty  open sets    such that $\overline{\mathcal D_1 }\cap\overline{\mathcal D_2}=\emptyset$ and   $\Omega\setminus(\overline{\mathcal D_1}\cup\overline{\mathcal D_2})$  is  connected.
\begin{itemize}
\item[i)] There exists a sequence
$(g_n)_{n\in \mathbb{N}}\subset L^2(\Gamma_\textup{N},\R^d)$ such that the corresponding solutions
$(u^{(g_n)})_{n\in \mathbb{N}}$ of (\ref{direct}) fulfill 
\begin{equation}
\label{localized_u1}
\lim_{n\to \infty}\int_{\mathcal D_1}\Vert \nabla \cdot u^{(g_n)}\Vert^2 \,dx=\infty,
\end{equation}
\begin{equation}
\label{localized_u2}
\lim_{n\to \infty}\int_{\mathcal D_1}\Vert \nabla^s u^{(g_n)}\Vert^2_F\,dx=\infty,
\end{equation}
\begin{equation}
\label{localized_u3}
\lim_{n\to \infty}\int_{\mathcal D_2}\Vert \nabla\cdot u^{(g_n)}\Vert^2\,dx=0.
\end{equation}
\item[ii)] There exists a sequence
$(\tilde g_n)_{n\in \mathbb{N}}\subset L^2(\Gamma_\textup{N},\R^d)$ such that the corresponding solutions
$(u^{(\tilde g_n)})_{n\in \mathbb{N}}$ of (\ref{direct}) fulfill 
\begin{equation}
\label{localized_u4}
\lim_{n\to \infty}\int_{\mathcal D_1}\Vert  u^{(\tilde g_n)}\Vert^2\,dx=\infty.
\end{equation}
\begin{equation}
\label{localized_u5}
\lim_{n\to \infty}\int_{\mathcal D_2}\Vert u^{(\tilde g_n)}\Vert^2\,dx=0,
\end{equation}
\end{itemize}

\end{theorem}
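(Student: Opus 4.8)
The plan is to treat the two parts separately and to reduce part (i) to a functional-analytic dichotomy. Part (ii) requires no new work: with $\lambda,\mu,\rho$ held fixed it is exactly the conclusion \eqref{localized_grad_1}--\eqref{localized_grad_2} of Theorem \ref{locpot}, so I would simply invoke that theorem (taking $\tilde g_n=g_n$ there), after checking that the hypotheses imposed on $\mathcal D_1,\mathcal D_2$ here match those needed in Theorem \ref{locpot}. Thus all the effort goes into part (i), i.e. into controlling the first-order strain energies $\int_{\mathcal D_1}\Vert\nabla\cdot u\Vert^2$, $\int_{\mathcal D_1}\Vert\nabla^s u\Vert_F^2$ and $\int_{\mathcal D_2}\Vert\nabla\cdot u\Vert^2$, rather than the zeroth-order energy $\int\Vert u\Vert^2$ handled in Section 3.

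For part (i) I would first record the pointwise inequality $\Vert\nabla^s u\Vert_F^2\ge\frac1d|\nabla\cdot u|^2$ (since $\nabla\cdot u=\mathrm{tr}(\nabla^s u)$ and Cauchy--Schwarz), which shows that \eqref{localized_u2} is an automatic consequence of \eqref{localized_u1}: blowing up the divergence energy on $\mathcal D_1$ forces the symmetric-gradient energy there to blow up as well. Hence it suffices to realise \eqref{localized_u1} and \eqref{localized_u3} by a single sequence. To this end I introduce, for fixed coefficients, the bounded linear operators
\[
L_j:\ L^2(\Gamma_N,\R^d)\to L^2(\mathcal D_j),\qquad L_j g:=\nabla\cdot u^{g}\big|_{\mathcal D_j},\quad j=1,2,
\]
and use the range/norm duality that $\mathcal R(L_1^*)\subseteq\mathcal R(L_2^*)$ holds if and only if there is $c>0$ with $\Vert L_1 g\Vert_{L^2(\mathcal D_1)}\le c\,\Vert L_2 g\Vert_{L^2(\mathcal D_2)}$ for all $g$. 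Consequently, if no such $c$ exists, I can pick $g_n$ with $\Vert L_1 g_n\Vert\ge n^2\Vert L_2 g_n\Vert$ and, using linearity of $g\mapsto u^g$ to rescale so that $\Vert L_1 g_n\Vert=n$, obtain $\int_{\mathcal D_1}|\nabla\cdot u^{g_n}|^2\to\infty$ and $\int_{\mathcal D_2}|\nabla\cdot u^{g_n}|^2\le 1/n^2\to0$, which is exactly \eqref{localized_u1} and \eqref{localized_u3}. Computing $L_j^*$ by testing the weak formulation \eqref{eqv1} against the solution of an auxiliary problem with interior source supported in $\mathcal D_j$ identifies $\mathcal R(L_j^*)$ with Neumann traces of solutions that are homogeneous outside $\overline{\mathcal D_j}$, which is the form in which unique continuation (already exploited in Lemma \ref{thm:runge}) enters.

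The heart of the argument is to refute the inequality $\Vert L_1 g\Vert\le c\Vert L_2 g\Vert$, and here I would use the Runge approximation of Lemma \ref{thm:runge}. Choose slightly enlarged open sets $\mathcal D_j'$ with $\overline{\mathcal D_j}\subset\mathcal D_j'$, $\overline{\mathcal D_1'}\cap\overline{\mathcal D_2'}=\emptyset$, $\overline{\mathcal D_j'}\subset\Omega$ and connected complement meeting $\Gamma_N$, and build a local solution $\Phi\in S_{\mathcal D_1'\cup\mathcal D_2'}$ of the homogeneous system with $\Phi|_{\mathcal D_2'}\equiv0$ but $\nabla\cdot\Phi\not\equiv0$ on $\mathcal D_1$, the latter secured by prescribing on $\partial\mathcal D_1'$ a traction producing genuine compression so that $\Phi$ is not divergence-free. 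Lemma \ref{thm:runge} yields $\tilde g_n$ with $u^{\tilde g_n}\to\Phi$ in $L^2(\mathcal D_1'\cup\mathcal D_2')$. Since both $u^{\tilde g_n}$ and $\Phi$ solve the homogeneous system on each $\mathcal D_j'$, interior (Caccioppoli) estimates upgrade this to $\nabla(u^{\tilde g_n}-\Phi)\to0$ in $L^2(\mathcal D_j)$, because $\mathcal D_j\Subset\mathcal D_j'$; hence $\int_{\mathcal D_1}|\nabla\cdot u^{\tilde g_n}|^2\to\int_{\mathcal D_1}|\nabla\cdot\Phi|^2>0$ while $\int_{\mathcal D_2}|\nabla\cdot u^{\tilde g_n}|^2\to0$. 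This contradicts any fixed bound $\Vert L_1 g\Vert\le c\Vert L_2 g\Vert$, completing the reduction (and in fact a direct rescaling of $\tilde g_n$ by $\Vert L_2\tilde g_n\Vert^{-1/2}$ already produces the blow-up sequence, as in Theorem \ref{locpot}).

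I expect the main obstacle to be precisely this upgrade from $L^2$- to gradient-convergence: Lemma \ref{thm:runge} only delivers $L^2(D)$ approximation, whereas \eqref{localized_u1}--\eqref{localized_u3} are statements about first-order quantities. Establishing the interior elliptic (Caccioppoli) estimate for the Lamé system at the low regularity allowed by Hypothesis 1 ($\mu\in C^{0,1}$, $\lambda,\rho\in L^\infty_+$), and keeping $\mathcal D_j$ separated from the artificial boundary $\partial\mathcal D_j'$ where the construction of $\Phi$ lives, is the delicate point. A secondary technical issue is verifying that the constructed $\Phi$ genuinely satisfies $\nabla\cdot\Phi\not\equiv0$ on $\mathcal D_1$ (equivalently, that not every local solution is divergence-free), which I would settle by the explicit choice of boundary traction on $\partial\mathcal D_1'$.
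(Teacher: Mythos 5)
Your argument is sound and reaches the same conclusions, but for part (i) it takes a genuinely different route from the paper. The paper also works with the virtual measurement operators (its $T_j$, whose adjoints are your $L_j$, and analogous operators $Z_j$ for part (ii)), but it establishes the required range non-inclusion $\mathcal{R}(T_1)\not\subseteq\mathcal{R}(T_2)$ abstractly: unique continuation for Cauchy data gives $\mathcal{R}(T_1)\cap\mathcal{R}(T_2)=\{0\}$ together with density of the ranges, and then the cited functional-analytic result (norm inequality $\Leftrightarrow$ range inclusion of adjoints) immediately produces the blow-up/decay sequence; no Runge approximation and no interior regularity enter at all, and part (ii) is handled in the same stroke via the $Z_j$ rather than by citing Theorem \ref{locpot} as you do (both are fine). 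Your constructive refutation of the norm inequality via Runge approximation plus a Caccioppoli upgrade is legitimate and mirrors the structure of the paper's Section 3 proof, and it has the merit of actually exhibiting the sequence; but it pays for this with exactly the two obligations you flag, neither of which the paper incurs: (a) the passage from $L^2(\mathcal D_j')$-convergence to convergence of $\nabla\cdot u^{\tilde g_n}$ on $\mathcal D_j\Subset\mathcal D_j'$ (this does work: testing the equation for the difference against $\chi^2(u^{\tilde g_n}-\Phi)$ and using $\lambda>0$, $\mu\geq\delta_0$ controls $\Vert\chi\nabla^s(u^{\tilde g_n}-\Phi)\Vert_{L^2}$, and $|\nabla\cdot w|^2\leq d\Vert\nabla^s w\Vert_F^2$ then gives what you need without even invoking Korn); and (b) the existence of a local solution $\Phi$ with $\nabla\cdot\Phi\not\equiv 0$ on $\mathcal D_1$, which is genuinely not automatic from "choose a compressive traction" and deserves a real argument (for instance: if every local solution on $\mathcal D_1'$ had vanishing divergence on $\mathcal D_1$, then $T_1^*\equiv 0$, hence $T_1\equiv 0$, contradicting the nontriviality of $\mathcal{R}(T_1)$ that unique continuation provides). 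Your observation that \eqref{localized_u2} follows from \eqref{localized_u1} via $\Vert\nabla^s u\Vert_F^2\geq \frac{1}{d}|\nabla\cdot u|^2$ is exactly the (unstated) justification in the paper. In short: correct, more constructive, but technically heavier than the paper's abstract range argument.
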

Before proving Theorem  \ref{thm:locpot},  we state the following lemmas.
Define the virtual  measurement operators 
 $T_j, Z_j$,  $j=1,2$, by
\[
T_j :  L^2(\mathcal D_j,\R)\rightarrow  L^2(\Gamma_\textup{N},\R^d), \quad  F\mapsto  v|_{\Gamma_\textup{N}},
\]
\[
Z_j :  L^2(\mathcal D_j,\R^d)\rightarrow  L^2(\Gamma_\textup{N},\R^d), \quad  G\mapsto  w|_{\Gamma_\textup{N}},
\]

 where $v, w\in H^1(\Omega,\R^d)$ solve 
 \begin{equation}\label{dual_T} 
 \int_\Omega\lambda(\nabla\cdot v)(\nabla\cdot \varphi)+2\mu\nabla^s v:\nabla^s \varphi\,dx+\int_\Omega \rho v\cdot \varphi\,dx=\int_{\mathcal D_j}  F\cdot (\nabla\cdot\varphi)\,dx \quad \text{ for all } \varphi\in \V, 
 \end{equation}
 \begin{equation}\label{dual_Z} 
  \int_\Omega\lambda(\nabla\cdot w)(\nabla\cdot \psi)+2\mu\nabla^s w:\nabla^s \psi\,dx+\int_\Omega \rho w\cdot \psi\,dx=\int_{\mathcal D_j}  G\cdot \psi\,dx \quad \text{ for all } \psi\in \mathcal{V}.
 \end{equation}
 
\begin{lemma}\label{dual}
The dual operators $T^*_j,  Z^*_j,  j=1,2$,  are given by
 \begin{equation}
\label{dual1}
T^*_j : L^2(\Gamma_\textup{N},\R^d)  \rightarrow  L^2(\mathcal D_j,\R):  g \mapsto T^*_jg= \nabla\cdot u|_{\mathcal D_j}, 
\end{equation}
\begin{equation}
\label{dual2}
Z^*_j : L^2(\Gamma_\textup{N},\R^d)  \rightarrow   L^2(\mathcal D_j,\R^d): g\mapsto Z^*_j g= u|_{D_j},
\end{equation}
where $u$ is the  unique solution of   (\ref{direct}) with respect to the Neumann boundary data $g$.
\end{lemma}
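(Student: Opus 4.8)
The plan is to establish both adjoint identities by the same symmetric-bilinear-form duality argument that already appeared in the proof of Lemma~\ref{thm:runge}. Write $B(u,w):=\int_\Omega \lambda(\nabla\cdot u)(\nabla\cdot w)+2\mu\,\nabla^s u:\nabla^s w\,dx+\int_\Omega \rho\, u\cdot w\,dx$ for the bilinear form associated with \eqref{eqv1}. Under Hypothesis~1 this form is bounded, symmetric and coercive on $\mathcal{V}$, so by Lax--Milgram every problem appearing in the statement---the direct problem \eqref{direct} with data $g$, together with the auxiliary problems \eqref{dual_T} and \eqref{dual_Z}---has a unique solution lying in $\mathcal{V}$. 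In particular $u,v,w\in\mathcal{V}$, which is exactly what licenses us to insert each of them as a test function in the others' weak formulations.

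For $T_j$ I would first unfold the $L^2(\Gamma_N)$ pairing as $\langle T_j F,g\rangle=\int_{\Gamma_N} v\cdot g\,ds$ and then compute this quantity in two ways. Choosing $w=v$ as test function in the weak formulation \eqref{eqv1} of the direct problem for $u$ gives $\int_{\Gamma_N} g\cdot v\,ds=B(u,v)$, while choosing $\varphi=u$ in \eqref{dual_T} gives $B(v,u)=\int_{\mathcal D_j} F\cdot(\nabla\cdot u)\,dx$. Since $B$ is symmetric, $B(u,v)=B(v,u)$, so the two right-hand sides coincide, yielding
\[
\langle T_j F,g\rangle_{L^2(\Gamma_N,\R^d)}=\int_{\mathcal D_j} F\cdot(\nabla\cdot u)\,dx=\langle F,\nabla\cdot u|_{\mathcal D_j}\rangle_{L^2(\mathcal D_j,\R)}.
\]
As this holds for all $F\in L^2(\mathcal D_j,\R)$ and all $g\in L^2(\Gamma_N,\R^d)$, the characterization $T_j^* g=\nabla\cdot u|_{\mathcal D_j}$ follows. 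The argument for $Z_j$ is identical in structure: unfolding $\langle Z_j G,g\rangle=\int_{\Gamma_N} w\cdot g\,ds$, testing \eqref{eqv1} with $w$ and \eqref{dual_Z} with $\psi=u$, and invoking the symmetry $B(u,w)=B(w,u)$ gives $\langle Z_j G,g\rangle=\int_{\mathcal D_j} G\cdot u\,dx$, hence $Z_j^* g=u|_{\mathcal D_j}$.

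The computation is essentially routine, so there is no deep obstacle; the only points requiring care are bookkeeping ones. First, one must verify that the auxiliary solutions $v,w$ genuinely lie in $\mathcal{V}$ (so that $v|_{\Gamma_D}=w|_{\Gamma_D}=0$ and they are admissible test functions in \eqref{eqv1}), and likewise that $u\in\mathcal{V}$; this is precisely where coercivity under Hypothesis~1 is used. Second, one must keep straight the two different source terms---$F\cdot(\nabla\cdot\varphi)$ in \eqref{dual_T} versus $G\cdot\psi$ in \eqref{dual_Z}---since these are exactly what produce the divergence in $T_j^*$ as opposed to the plain trace in $Z_j^*$. The entire proof hinges on the symmetry of the elasticity bilinear form $B$, which is what makes the ``test each problem with the other problem's solution'' trick close up.
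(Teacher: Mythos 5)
Your proof is correct and follows essentially the same route as the paper: unfold the boundary pairing $\langle T_jF,g\rangle=\int_{\Gamma_N}g\cdot v\,ds$, test the direct problem with $v$ and the auxiliary problem \eqref{dual_T} with $u$, and use the symmetry of the elasticity bilinear form to identify the two sides, and likewise for $Z_j$. The only difference is that you make explicit the Lax--Milgram well-posedness and the symmetry of $B$, which the paper leaves implicit.
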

\begin{proof}
To prove \eqref{dual1}, let $F\in L^2(\Omega,\R^d)$, $g\in L^2(\Gamma_N,\R^d)$, $u$, $v\in \V$ solve (\ref{direct}) and (\ref{dual_T}), respectively. Then,
\[
\begin{aligned}
\int_{\Omega}F  T^{*}_j g\,dx 
&= \int_{\Gamma_\textup{N}}g\cdot T_j F\,ds
=\int_{\Omega}\lambda(\nabla\cdot v)(\nabla\cdot u)+2\mu\nabla^s v:\nabla^s u\,dx+ \int_\Omega  \rho v\cdot u\,dx\\
&=\int_{\mathcal D_j} F (\nabla\cdot u)\,dx.
\end{aligned}
\]
This implies that   $T^{*}_j g= \nabla\cdot u\vert_{\mathcal D_j}$.\\
\noindent
To prove \eqref{dual2}, let $G\in L^2(\Omega, \R^{d})$, $g\in L^2(\Gamma_N,\R^d)$,  $u$, $w\in \V$ solve (\ref{direct}) and (\ref{dual_Z}), respectively. Then,
\[
\begin{aligned}
\int_{\Omega}G :  Z^{*}_j g \,dx
& = \int_{\Gamma_N}g\cdot Z_j G\,ds
 =\int_{\Omega}\lambda(\nabla\cdot w)(\nabla\cdot u)+2\mu \nabla^s w :\nabla^s u\,dx+\int_\Omega \rho w\cdot u\,dx\\
 &=\int_{ \mathcal D_j}G\cdot u\,dx,
\end{aligned}
\]
which implies that   $Z^{*}_j g= u\vert_{\mathcal D_j}$.
\end{proof}
 Next, we  show some properties of  the ranges  $\mathcal{R}(T_j)$, and $\mathcal{R}(Z_j),   j=1,2$.
 \begin{lemma} 
 \label{range1}
 The ranges  $\mathcal{R}(T_j),  \mathcal{R}(Z_j, j=1,2$,  are  dense in $L^2(\Gamma_N,\R^d)$ and 
\[
\mathcal{R}(T_1)\cap  \mathcal{R}(T_2)=\lbrace 0\rbrace,
\]
\[
\mathcal{R}(Z_1)\cap  \mathcal{R}(Z_2)=\lbrace 0\rbrace.
\]
\end{lemma}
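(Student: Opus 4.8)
The statement to prove (Lemma~\ref{range1}) asserts that each range $\mathcal{R}(T_j)$ and $\mathcal{R}(Z_j)$ is dense in $L^2(\Gamma_N,\R^d)$, and that the two ranges of each type intersect only at $0$. The density claims reduce, by duality, to injectivity of the adjoints $T_j^*$ and $Z_j^*$, whose explicit forms are supplied by Lemma~\ref{dual}. The intersection claims are the genuinely substantive part and will be driven by the unique continuation property invoked elsewhere in the paper (from \cite{lin2010quantitative}), combined with the disjointness and connectivity hypotheses on $\mathcal D_1,\mathcal D_2,\Omega\setminus(\overline{\mathcal D_1}\cup\overline{\mathcal D_2})$ carried over from Theorem~\ref{thm:locpot}.

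**Density via injective adjoints.** First I would recall that $\mathcal{R}(T_j)$ is dense iff $T_j^*$ is injective, and similarly for $Z_j$. By Lemma~\ref{dual}, $Z_j^* g = u|_{\mathcal D_j}$ where $u$ solves \eqref{direct} with Neumann data $g$. Suppose $Z_j^* g = 0$, i.e.\ $u|_{\mathcal D_j}=0$. Since $\mathcal D_j\Subset\Omega$ has positive measure and $u$ solves the homogeneous elliptic system on $\Omega$, the unique continuation result gives $u\equiv 0$ on the connected component reaching $\Gamma_N$, hence $u|_{\Gamma_N}=0$ and $(\lambda(\nabla\cdot u)I+2\mu\nabla^s u)\nu|_{\Gamma_N}=g=0$; thus $Z_j^*$ is injective and $\mathcal{R}(Z_j)$ is dense. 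For $T_j^*g=\nabla\cdot u|_{\mathcal D_j}=0$ I would argue similarly, noting that the vanishing of the relevant field on $\mathcal D_j$ together with unique continuation forces $u\equiv 0$ and hence $g=0$; this is the step where I expect to lean most carefully on the precise hypotheses of Hypothesis~1 that license \cite{lin2010quantitative}.

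**The intersection $\{0\}$.** This is the main obstacle. Take $h\in\mathcal{R}(Z_1)\cap\mathcal{R}(Z_2)$, so $h=Z_1 G_1=Z_2 G_2$ for some $G_j\in L^2(\mathcal D_j,\R^d)$. Writing $w_1,w_2\in H^1(\Omega,\R^d)$ for the solutions of \eqref{dual_Z} with these data and $j=1,2$, the key point is that the difference $w:=w_1-w_2$ has the same Dirichlet trace $w_1|_{\Gamma_N}=h=w_2|_{\Gamma_N}$ and the same Neumann trace on $\Gamma_N$ (both vanish against test functions by the variational identities, since the source terms are supported away from $\Gamma_N$). On the connected open set $\Omega\setminus(\overline{\mathcal D_1}\cup\overline{\mathcal D_2})$ the function $w$ solves the homogeneous system with vanishing Cauchy data on the relief of $\Gamma_N$, so unique continuation propagates $w\equiv 0$ there. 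Because $\overline{\mathcal D_1}\cap\overline{\mathcal D_2}=\emptyset$, I can then pass across $\partial\mathcal D_1$ and $\partial\mathcal D_2$ separately: on $\mathcal D_2$ the field $w_1$ solves the homogeneous equation (its source sits in $\mathcal D_1$), and matching it to the already-vanishing exterior solution forces $w_1\equiv 0$ on $\mathcal D_2$, and symmetrically $w_2\equiv 0$ on $\mathcal D_1$. Tracking this back through the traces yields $h=w_1|_{\Gamma_N}=0$. The argument for $T_1,T_2$ is identical after replacing the source $G\cdot\psi$ by $F\cdot(\nabla\cdot\psi)$ and reading off $\nabla\cdot u$ in place of $u$.

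**Where the difficulty sits.** The delicate point throughout is justifying the unique continuation steps: I must verify that $w$ genuinely has vanishing Cauchy data on a relatively open piece of $\Gamma_N$ and that the regularity afforded by Hypothesis~1 ($\mu\in C^{0,1}$, the strong ellipticity bounds) places us squarely within the scope of \cite{lin2010quantitative}, so that continuation across the artificial interfaces $\partial\mathcal D_j$ is legitimate. The disjointness $\overline{\mathcal D_1}\cap\overline{\mathcal D_2}=\emptyset$ is what lets me treat the two sources independently once the exterior solution vanishes; without it the two interfaces could touch and the localization would break down. I would close by remarking that exactly these three facts — dense ranges and trivial pairwise intersection — are what make the functional-analytic separation argument in the proof of Theorem~\ref{thm:locpot} go through.
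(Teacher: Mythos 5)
The paper disposes of this lemma in a single sentence (``based on the unique continuation principle for Cauchy data''), so your proposal is an attempt to supply details along what is indeed the intended route: density via injectivity of the adjoints computed in Lemma~\ref{dual}, and trivial intersection via vanishing Cauchy data on $\Gamma_N$ plus unique continuation. Your argument for the density of $\mathcal{R}(Z_j)$ is sound ($u|_{\mathcal D_j}=0$ on an open set propagates to $u\equiv 0$, hence $g=0$). The density of $\mathcal{R}(T_j)$, however, does not follow ``similarly'': there $T_j^*g=\nabla\cdot u|_{\mathcal D_j}$, so injectivity requires that the vanishing of the \emph{divergence} of $u$ on $\mathcal D_j$ force $g=0$. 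Vanishing of $\nabla\cdot u$ on an open set is neither vanishing of $u$ on an open set nor vanishing Cauchy data on a hypersurface, so no standard unique continuation result applies; the system restricted to $\mathcal D_j$ under the constraint $\nabla\cdot u=0$ is a Stokes-type system with many nontrivial solutions. This is the genuinely delicate part of the lemma and your proposal skips it.

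The intersection argument has the right skeleton but a broken last step. You correctly observe that $w:=w_1-w_2$ has zero Cauchy data on $\Gamma_N$ (both $w_i$ carry zero traction there and share the Dirichlet trace $h$), so unique continuation gives $w\equiv 0$ on $U:=\Omega\setminus(\overline{\mathcal D_1}\cup\overline{\mathcal D_2})$. But you then conclude that ``matching $w_1$ to the already-vanishing exterior solution forces $w_1\equiv 0$ on $\mathcal D_2$'': the exterior solution that vanishes is $w$, not $w_1$, and what you actually know is only $w_1=w_2$ on $U$; $w_1$ need not vanish anywhere. The standard way to finish is by gluing: since $\overline{\mathcal D_1}\cap\overline{\mathcal D_2}=\emptyset$, the open sets $\Omega\setminus\overline{\mathcal D_1}$ and $\Omega\setminus\overline{\mathcal D_2}$ cover $\Omega$ and intersect exactly in $U$, where $w_1=w_2$; hence $v:=w_1$ on the first set and $v:=w_2$ on the second is a well-defined element of $H^1(\Omega,\R^d)$ solving the homogeneous problem with zero traction on $\Gamma_N$ and $v|_{\Gamma_D}=0$, so $v\equiv 0$ by well-posedness of the forward problem, and $h=w_1|_{\Gamma_N}=v|_{\Gamma_N}=0$. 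With that replacement (and the analogous gluing for $T_1,T_2$) the intersection claims are established; the density of $\mathcal{R}(T_j)$ remains the open gap in your write-up.
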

\begin{proof}
 The proof is based on the unique continuation principle for Cauchy data  \cite{lin2010quantitative}. 
\end{proof}

\begin{proof}[Proof of Theorem \ref{thm:locpot}]
Lemma \ref{range1}   implies  the range non inclusions  $\mathcal{R}(T_1)\not\subseteq \mathcal{R}(T_2)$ and  $\mathcal{R}(Z_1)\not\subseteq \mathcal{R}(Z_2)$.
Using \cite[Corollary 2.6]{gebauer2008localized} together with Lemma \ref{dual}, it follows that there exist  sequences
$(g_n)_{n\in \mathbb{N}}, (\tilde g_n)_{n\in \mathbb{N}}\subset L^2(\Gamma_\textup{N},\R^d)$  such  that  the corresponding  solutions  $(u^{(g_n)})_{n\in\mathbb{N}}, (u^{(\tilde g_n)})_{n\in\mathbb{N}}
\subset \mathcal{V}$ fulfill
\[
\lim_{n\to \infty}\Vert T_1^* g_n\Vert^2_{L^2(\mathcal D_1,\R^{d})}=\lim_{n\to \infty}\int_{\mathcal D_1}\Vert \nabla\cdot u^{(g_n)}\Vert^2\,dx
=\infty,
\]
\[
\lim_{n\to \infty}\Vert T_2^* g_n\Vert^2_{L^2(\mathcal D_2,\R^{d})}=\lim_{n\to \infty}\int_{\mathcal D_2}\Vert \nabla \cdot u^{(g_n)}\Vert^2\,dx
=0,
\]

\noindent
and
\[
\lim_{n\to \infty}\Vert Z_1^* \tilde g_n\Vert^2_{L^2(\mathcal D_1,\R^d)}=\lim_{n\to \infty}\int_{\mathcal D_1}\Vert u^{(\tilde g_n)}\Vert^2\,dx
=\infty,
\]
\[
\lim_{n\to \infty}\Vert Z_2^* \tilde g_n\Vert^2_{L^2(\mathcal D_2,\R^{d\times d})}=\lim_{n\to \infty}\int_{\mathcal D_2}\Vert u^{(\tilde g_n)}\Vert^2\,dx=0.
\]
This proves \eqref{localized_u1}, \eqref{localized_u3}-\eqref{localized_u5}.   \eqref{localized_u2}
follows from  \eqref{localized_u1} and  the proof is completed.
\end{proof}
\subsection{ Lipschitz stability}
 Let $\mathcal{P}$ be a finite dimensional subset of  $\hat C(\Omega)\times C^{0,1}(\Omega)\times \hat C(\Omega)$ where  $\hat C(\Omega)$ is the set of piecewise  cintinuous functions.   We consider six constants  $0<a\leq b$,   $0< c \leq d$  and $0< e \leq f$,
  which are the lower and upper bounds of the   parameters  $\lambda,\mu$ and $\rho$,  and define the sets
\[
 \mathcal{P}_{[a,b]\times[c, d]\times[e,f]}=\left\{ (\lambda,\mu, \rho)\in \mathcal{P}:  \quad  a\leq\lambda(x)\leq b,  \quad  c\leq \mu\leq d, \quad e\leq \rho\leq f \textrm{ for all } x\in \Omega \right\}.
\]
 $\mathcal{E}:= \mathcal{E}_+ \cup \mathcal{E}_-$, with
\[
 \begin{aligned}
 \mathcal{E}_+ &=\left\{ (\zeta_1,\zeta_2, \zeta_3)\in \rm{span }\,\mathcal{P}:  \quad  \zeta_1,\zeta_2,\zeta_3\geq 0 \quad \text{  and }\quad 
 \Vert (\zeta_1, \zeta_2, \zeta_3)\Vert_\Delta= 1 \right\},\cr
 \mathcal{E}_-&=\left\{ (\zeta_1,\zeta_2, \zeta_3)\in \rm{span }\,\mathcal{P}:  \quad   \zeta_1, \zeta_2, \zeta_3\leq 0\quad \text{  and }\quad 
\Vert (\zeta_1, \zeta_2, \zeta_3)\Vert_\Delta = 1 \right\},
 \end{aligned}
\]
where  
\begin{equation}
\label{norm_inf}
\Vert (\zeta_1, \zeta_2, \zeta_3)\Vert_\Delta:= \max\left(\Vert \zeta_1\Vert_{L^\infty(\Omega)}, \Vert\zeta_2\Vert_{L^\infty(\Omega)},
\Vert\zeta_3\Vert_{L^\infty(\Omega)}\right).
\end{equation}
In the following main result of this paper, the domain $\Omega$, the finite-dimensional subset $\mathcal{P}
$ and the bounds $0< a\leq b$, $0< c \leq  d$  and  $0< e \leq  f$   are fixed, and the constant in the Lipschitz stability result will depend on them.
\begin{theorem}[Lipschitz stability]
\label{stability1}
 There exists a positive constant $C>0$ such that for all 
$(\lambda_1,\mu_1, \rho_1), (\lambda_2,\mu_2, \rho_2) \in \mathcal{P}_{[a, b]\times[c , d]\times[e,f]}$ with either
\[
\begin{aligned}
&(i)\quad \lambda_1\leq \lambda_2,\quad  \mu_1\leq \mu_2 \textrm{ and } \rho_1\leq \rho_2\quad \textrm{or},\\
&(ii)\quad \lambda_1\geq \lambda_2,\quad \mu_1\geq \mu_2 \textrm{ and }  \rho _1\geq \rho_2,
\end{aligned}
\]
\noindent
we have 
\begin{equation}
\label{stab-est1}
\Vert (\lambda_1-\lambda_2,\mu_1-\mu_2, \rho_1-\rho_2)\Vert_\Delta \leq   C\| \Lambda_{\lambda_1,\mu_1, \rho_1}-\Lambda_{\lambda_2,\mu_2,\rho_2} \|_*.
\end{equation}
Here  $\Vert.\Vert_*$ is the operator  norm of ${\mathcal L(L^2(\Gamma_\textup{N},\R^d))}$ and $\Vert \cdot\Vert_\Delta$  is defined in \eqref{norm_inf}. 
\end{theorem}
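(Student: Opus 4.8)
The plan is to mirror the structure of the proof of Theorem~\ref{stability} from Section~3, now adapted to the three-parameter setting with the definiteness hypothesis $(i)$ or $(ii)$. First I would reduce the desired estimate to a lower bound for the ratio $\|\Lambda_{\lambda_1,\mu_1,\rho_1}-\Lambda_{\lambda_2,\mu_2,\rho_2}\|_* / \|(\lambda_1-\lambda_2,\mu_1-\mu_2,\rho_1-\rho_2)\|_\Delta$. Using self-adjointness of the Neumann-to-Dirichlet operators together with the monotonicity estimate~\eqref{mono1} from Lemma~\ref{mono2}, one obtains, exactly as in~\eqref{estim_1}, that this operator-norm difference dominates
\[
\sup_{\|g\|=1}\Bigl(\textstyle\int_\Omega(\lambda_1-\lambda_2)\|\nabla\cdot u\|^2 + 2(\mu_1-\mu_2)\|\nabla^s u\|_F^2 + (\rho_1-\rho_2)\|u\|^2\,dx\Bigr),
\]
evaluated at a suitable solution $u$. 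The definiteness condition is precisely what guarantees that all three signed terms cooperate (rather than cancel), so that under $(i)$ the integrand is controlled below and under $(ii)$ the roles interchange.

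Next I would introduce the normalized difference $\zeta=(\zeta_1,\zeta_2,\zeta_3)=(\lambda_1-\lambda_2,\mu_1-\mu_2,\rho_1-\rho_2)/\|(\cdots)\|_\Delta$, which under hypothesis $(i)$ lies in the compact set $\mathcal{E}_-$ and under $(ii)$ in $\mathcal{E}_+$. Defining a three-parameter analogue of the functional $J$, say
\[
\widetilde J(g,\zeta,\tau):=\int_\Omega \zeta_1\|\nabla\cdot u^g_\tau\|^2 + 2\zeta_2\|\nabla^s u^g_\tau\|_F^2 + \zeta_3\|u^g_\tau\|^2\,dx,
\]
with $\tau=(\lambda,\mu,\rho)$ ranging over $\mathcal{P}_{[a,b]\times[c,d]\times[e,f]}$, I would follow Lemma~\ref{lem_lower} to bound the ratio below by $\inf_{\zeta\in\mathcal{E},\,\tau}\sup_{\|g\|=1}\widetilde J(g,\zeta,\tau)$. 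By continuity of $\widetilde J$ and compactness of $\mathcal{E}\times\mathcal{P}_{[a,b]\times[c,d]\times[e,f]}^{\,2}$, the infimum is attained (the analogue of Lemma~\ref{lem_inf}), so it remains only to show strict positivity of this attained value.

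The heart of the argument is the positivity step, the analogue of~\eqref{estim_4}. Fix $(\zeta,\tau)$; since $\|\zeta\|_\Delta=1$, at least one of $\zeta_1,\zeta_2,\zeta_3$ has sup-norm one, so there is an open set $\mathcal{D}_1\Subset\Omega$ and $\beta>0$ on which one of the three components is bounded below by $\beta$ in absolute value. I would then invoke the localized-potentials Theorem~\ref{thm:locpot}: part $(i)$ controls $\|\nabla\cdot u^{g_n}\|^2$ and $\|\nabla^s u^{g_n}\|_F^2$ while suppressing them on $\mathcal{D}_2$, and part $(ii)$ does the same for $\|u^{g_n}\|^2$. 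This lets me blow up the dominant term on $\mathcal{D}_1$ while keeping the remaining contributions bounded, forcing $\widetilde J>0$. The main obstacle I anticipate is precisely this last step when the component realizing the norm is $\zeta_1$ (the $\lambda$-component): the localized potential that inflates $\int_{\mathcal{D}_1}\|\nabla\cdot u\|^2$ must simultaneously keep $\int_{\mathcal{D}_2}\|\nabla^s u\|_F^2$ and $\int_{\mathcal{D}_2}\|u\|^2$ from growing comparably, since on $\mathcal{D}_1$ the other two terms carry the \emph{same} (negative, under $(ii)$) sign and could in principle overwhelm the divergence term. Reconciling the three energy controls through a single sequence $(g_n)$ — rather than the separate sequences produced by parts $(i)$ and $(ii)$ of Theorem~\ref{thm:locpot} — is the delicate point; I would resolve it by a careful case split on which component attains the norm and, in the $\zeta_1$ or $\zeta_2$ case, by appealing to the fact that~\eqref{localized_u2} is deduced from~\eqref{localized_u1} so that a single sequence controls both gradient energies at once, together with the definiteness assumption which aligns all signs favorably on $\mathcal{D}_1$.
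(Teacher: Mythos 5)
Your proposal follows essentially the same route as the paper: self-adjointness plus the monotonicity estimate of Lemma~\ref{mono2} reduce the claim to an inf--sup bound over the compact set $\mathcal{E}\times\mathcal{P}_{[a,b]\times[c,d]\times[e,f]}^{\,2}$, lower semi-continuity gives attainment, and positivity is obtained by a case split on which component of $\zeta$ is bounded below by $\delta$ on some $\mathcal{D}_1$, combined with Theorem~\ref{thm:locpot}. The one point worth correcting is that the ``delicate point'' you anticipate is not actually there: the definiteness hypothesis places $\zeta$ (or $-\zeta$) in $\mathcal{E}_+$, so in the term of the max $\Phi$ that one bounds from below \emph{all three} coefficients $\zeta_1,\zeta_2,\zeta_3$ are nonnegative on all of $\Omega$; the two non-dominant energy integrals can therefore simply be discarded as nonnegative, and no $\mathcal{D}_2$-suppression or reconciliation of the three localized-potential controls through a single sequence is needed.
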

\subsubsection{Proof of Theorem \ref{stability1}}
In this subsection, we prove Theorem \ref{stability1} using    the monotonicity relations in Lemma \ref{mono}  and the results of localized potentials derived in Theorem \ref{thm:locpot}
\begin{lemma}
 For $(\lambda_1,\mu_1,\rho_1)\neq (\lambda_2,\mu_2,\rho_2)$, we have 
\begin{equation}\label{estim}
\begin{aligned}
 &\frac{\Vert \Lambda_{\lambda_2,\mu_2,\rho_2}-\Lambda_{\lambda_1,\mu_1,\rho_1} {\Vert_*}}
{\Vert (\lambda_1-\lambda_2,\mu_1-\mu_2, \rho_1-\rho_2)\Vert_\Delta}\\
& \geq\inf_{\substack{(\zeta_1, \zeta_2,\zeta_3)\in \mathcal{E}\cr (\kappa_1,\tau_1,\eta_1),(\kappa_2,\tau_2,\eta_2)\in\mathcal{P}_{[a , b]\times[c, d]\times[e,f]}}} \sup_{\| g \|=1} \Phi\left(g,(\zeta_1,\zeta_2,\zeta_3),(\kappa_1,\tau_1, \eta_1),(\kappa_2,\tau_2, \eta_2)\right),
\end{aligned}
\end{equation}
where 
\[
\begin{aligned}
& \Phi\left(g,(\zeta_1,\zeta_2,\zeta_3),(\kappa_1,\tau_1,\eta_1),(\kappa_2,\tau_2,\eta_2)\right)\\
& :=\max \left(\Psi\left(g,(\zeta_1,\zeta_2,\zeta_3),(\kappa_1,\tau_1,\eta_1)\right), \Psi\left(g,(-\zeta_1,-\zeta_2, -\zeta_3),(\kappa_2,\tau_2,\eta_2)\right)  \right),
\end{aligned}
\]
with
\[
\Psi\left(g,(\alpha,\beta,\gamma),(\kappa,\tau,\eta )\right):=
\int_{\Omega}\alpha \nabla\cdot u_{(\kappa,\tau,\eta)}^{g}\,dx+
2\int_{\Omega} \beta \vert\nabla^s u_{(\kappa,\tau,\eta)}^{g}\vert^2\,dx+
\int_\Omega \gamma\vert u^{g}_{(\kappa,\tau,\eta)}\vert^2\,dx.
\]

\end{lemma}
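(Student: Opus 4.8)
The plan is to reproduce, for the three-parameter problem, the argument used for the density alone in Lemma~\ref{lem_lower}, now pushing the full monotonicity estimate \eqref{mono1} of Lemma~\ref{mono2} through the same functional-analytic reduction. First I would use that $\Lambda_{\lambda_1,\mu_1,\rho_1}$ and $\Lambda_{\lambda_2,\mu_2,\rho_2}$ are self-adjoint to express the operator norm as a supremum of a signed quadratic form,
\[
\|\Lambda_{\lambda_1,\mu_1,\rho_1}-\Lambda_{\lambda_2,\mu_2,\rho_2}\|_* = \sup_{\|g\|=1}\max\left\{\langle g,(\Lambda_{\lambda_1,\mu_1,\rho_1}-\Lambda_{\lambda_2,\mu_2,\rho_2})g\rangle,\ \langle g,(\Lambda_{\lambda_2,\mu_2,\rho_2}-\Lambda_{\lambda_1,\mu_1,\rho_1})g\rangle\right\}.
\]

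Next I would invoke Lemma~\ref{mono2}. The lower bound in \eqref{mono1}, which is phrased in terms of $u_1=u^g_{\lambda_1,\mu_1,\rho_1}$, reads exactly
\[
\langle g,\Lambda_{\lambda_2,\mu_2,\rho_2}g\rangle-\langle g,\Lambda_{\lambda_1,\mu_1,\rho_1}g\rangle \geq \Psi\!\left(g,(\lambda_1-\lambda_2,\mu_1-\mu_2,\rho_1-\rho_2),(\lambda_1,\mu_1,\rho_1)\right),
\]
and interchanging the two parameter triples gives the companion estimate expressed through $u_2=u^g_{\lambda_2,\mu_2,\rho_2}$ and the sign-reversed difference $(\lambda_2-\lambda_1,\mu_2-\mu_1,\rho_2-\rho_1)$. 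Taking the maximum of the two bounds and the supremum over $\|g\|=1$ yields
\[
\|\Lambda_{\lambda_2,\mu_2,\rho_2}-\Lambda_{\lambda_1,\mu_1,\rho_1}\|_* \geq \sup_{\|g\|=1}\max\left\{\Psi\!\left(g,(\lambda_1-\lambda_2,\mu_1-\mu_2,\rho_1-\rho_2),(\lambda_1,\mu_1,\rho_1)\right),\ \Psi\!\left(g,(\lambda_2-\lambda_1,\mu_2-\mu_1,\rho_2-\rho_1),(\lambda_2,\mu_2,\rho_2)\right)\right\}.
\]

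The decisive step is the normalization. Since $\Psi$ is linear in its middle (coefficient) argument, dividing by $M:=\|(\lambda_1-\lambda_2,\mu_1-\mu_2,\rho_1-\rho_2)\|_\Delta$ is the same as replacing that argument by the normalized triple $(\zeta_1,\zeta_2,\zeta_3):=M^{-1}(\lambda_1-\lambda_2,\mu_1-\mu_2,\rho_1-\rho_2)$, which satisfies $\|(\zeta_1,\zeta_2,\zeta_3)\|_\Delta=1$, while the second term is carried by $-(\zeta_1,\zeta_2,\zeta_3)$. Here is precisely where the hypotheses $(i)$ or $(ii)$ enter: the definiteness assumption forces $\lambda_1-\lambda_2$, $\mu_1-\mu_2$, $\rho_1-\rho_2$ to share a common sign, so that $(\zeta_1,\zeta_2,\zeta_3)\in\mathcal{E}_+\cup\mathcal{E}_-=\mathcal{E}$ and is therefore an admissible index in the infimum. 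I expect this to be the main obstacle of the statement: without the common-sign condition the normalized difference could have mixed-sign components and would fail to lie in $\mathcal{E}$, so the identification of the lower bound with the $\mathcal{E}$-indexed infimum would collapse, and this is exactly the reason the theorem is stated only under a definiteness hypothesis.

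Finally, since $(\lambda_1,\mu_1,\rho_1),(\lambda_2,\mu_2,\rho_2)\in\mathcal{P}_{[a,b]\times[c,d]\times[e,f]}$, the particular admissible choice $\big((\zeta_1,\zeta_2,\zeta_3),(\lambda_1,\mu_1,\rho_1),(\lambda_2,\mu_2,\rho_2)\big)$ realizes one value of the quantity $\sup_{\|g\|=1}\Phi$, and hence the right-hand side of the normalized estimate is bounded below by the infimum of $\sup_{\|g\|=1}\Phi$ over all admissible triples. Recognizing $\max\{\Psi(\cdot,(\zeta_1,\zeta_2,\zeta_3),\cdot),\Psi(\cdot,(-\zeta_1,-\zeta_2,-\zeta_3),\cdot)\}$ as $\Phi$ then gives \eqref{estim} and completes the proof.
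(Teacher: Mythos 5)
Your proposal is correct and follows essentially the same route as the paper: self-adjointness to write the operator norm as a supremum of the signed quadratic forms, the two-sided monotonicity estimate of Lemma~\ref{mono2}, normalization by $\Vert(\lambda_1-\lambda_2,\mu_1-\mu_2,\rho_1-\rho_2)\Vert_\Delta$ using linearity of $\Psi$ in the coefficient slot, and the observation that the definiteness hypothesis is exactly what places the normalized difference in $\mathcal{E}$ so that the infimum bound applies. Your remark that the common-sign condition is the crux (and would otherwise break the identification with the $\mathcal{E}$-indexed infimum) matches the paper's own use of assumptions $(i)$/$(ii)$ at that step.
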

\begin{proof}
Since the Neumann-to-Dirichlet  operator  $\Lambda_{\lambda,\mu,\rho}$  is  self-adjoint, we   obtain   
\[
\begin{aligned}
& \Vert\Lambda_{\lambda_2,\mu_2,\rho_2}-\Lambda_{\lambda_1,\mu_1,\rho_1}\Vert_*\\
&=  \sup_{\Vert g\Vert=1} \vert  \langle g, \left(\Lambda_{\lambda_2,\mu_2,\rho_2}-\Lambda_{\lambda_1,\mu_1,\rho_1}\right) g\rangle\vert \cr
&=  \sup_{\Vert g\Vert=1} \max\left\{ \langle g, \left(\Lambda_{\lambda_2,\mu_2,\rho_2}-\Lambda_{\lambda_1,\mu_1,\rho_1}\right) g\rangle,
\langle g, \left(\Lambda_{\lambda_1,\mu_1,\rho_1}-\Lambda_{\lambda_2,\mu_2,\rho_2}\right) g\rangle\right\}.
\end{aligned}
\]
\noindent
From the  monotonicity relation (\ref{mono1}) in Lemma \ref{mono}, we   obtain for  $g\in L^2(\Gamma_\textup{N},\R^d)$
\noindent
\begin{equation}\label{estim_01}
\begin{aligned}
& \langle g,\left(\Lambda_{\lambda_2,\mu_2,\rho_2}-\Lambda_{\lambda_1,\mu_1,\rho_1}\right) g\rangle\\ 
&\geq \int_\Omega(\lambda_1-\lambda_2)\Vert\nabla\cdot u_{(\lambda_1,\mu_1,\rho_1)}^{g}\Vert^2\,dx
+ 2\int_\Omega(\mu_1-\mu_2)\Vert\nabla^s u_{(\lambda_1,\mu_1,\rho_1)}^{g}\Vert^2_F\,dx\\
&+\int_\Omega(\rho_1-\rho_2) \Vert u^{g}_{(\lambda_1,\mu_1,\rho_1)}\Vert^2\,dx,
\end{aligned}
\end{equation}
and
\begin{equation}\label{estim_02}
\begin{aligned}
& \langle g,\left(\Lambda_{\lambda_1,\mu_1,\rho_1}-\Lambda_{\lambda_2,\mu_2,\rho_2}\right) g\rangle\\ 
&\geq \int_\Omega(\lambda_2-\lambda_1)\Vert\nabla\cdot u_{(\lambda_2,\mu_2,\rho_2)}^{g}\Vert^2\,dx
+ 2\int_\Omega(\mu_2-\mu_1)\Vert\nabla^s u_{(\lambda_2,\mu_2,\rho_2)}^{g}\Vert^2_F\,dx\\
&+\int_\Omega(\rho_2-\rho_1) \Vert u^{g}_{(\lambda_2,\mu_2,\rho_2)}\Vert^2\,dx,
\end{aligned}
\end{equation}
where $u_{\lambda_1,\mu_1,\rho_1}^{g},u_{\lambda_2,\mu_2,\rho_2}^{g}\in  \V$ denote the solutions of (\ref{direct}) with Neumann data $g$ and parameters  $(\lambda_1,\mu_1, \rho_1)$ and $(\lambda_2,\mu_2,\rho_2)$, respectively.
 Based on the estimates (\ref{estim_01}) and (\ref{estim_02}), we obtain for $(\lambda_1,\mu_1,\rho_1)\neq (\lambda_2,\mu_2,\rho_2)$
\begin{equation}
\label{estim_03}
 \frac{\Vert \Lambda_{\lambda_2,\mu_2,\rho_2}-\Lambda_{\lambda_1,\mu_1,\rho_1} {\Vert_*}}{\Vert (\lambda_1-\lambda_2,\mu_1-\mu_2, \rho_1-\rho_2)\Vert_\Delta}
\geq  
 \sup_{\Vert g\Vert=1}\Phi\left(g,\Theta_1,\Theta_2,\Theta_3,(\lambda_1,\mu_1,\rho_1),(\lambda_2,\mu_2, \rho_2)\right),
\end{equation}
with
\[
\Theta_1=\Theta_1(\lambda_1,\lambda_2,\mu_1,\mu_2,\rho_1,\rho_2):=\frac{\lambda_1-\lambda_2}{\Vert (\lambda_1-\lambda_2,\mu_1-\mu_2, \rho_1-\rho_2)\Vert_\Delta},
\]
\[
\Theta_2=\Theta_2(\lambda_1,\lambda_2,\mu_1,\mu_2,\rho_1,\rho_2):=\frac{\mu_1-\mu_2}{\Vert (\lambda_1-\lambda_2,\mu_1-\mu_2, \rho_1-\rho_2)\Vert_\Delta},
\]
\[
\Theta_3=\Theta_3(\lambda_1,\lambda_2,\mu_1,\mu_2,\rho_1,\rho_2):=\frac{\rho_1-\rho_2}{\Vert (\lambda_1-\lambda_2,\mu_1-\mu_2, \rho_1-\rho_2)\Vert_\Delta}.
\]
Then,  using that either assumption $(i)$ or assumption $(ii)$ is fulfilled, we can rewrite (\ref{estim_03}) as
\[
\begin{aligned}
& \frac{\Vert \Lambda_{\lambda_2,\mu_2,\rho_2}-\Lambda_{\lambda_1,\mu_1,\rho_1} {\Vert_*}}{\Vert (\lambda_1-\lambda_2,\mu_1-\mu_2, \rho_1-\rho_2)\Vert_\Delta}\\
&\geq\inf_{\substack{(\zeta_1, \zeta_2,\zeta_3)\in \mathcal{E}\cr (\kappa_1,\tau_1,\eta_1),(\kappa_2,\tau_2,\eta_2)\in\mathcal{P}_{[a , b]\times[c, d]\times[e,f]}}} \sup_{\| g \|=1} \Phi\left(g,(\zeta_1,\zeta_2,\zeta_3),(\kappa_1,\tau_1,\eta_1),(\kappa_2,\tau_2,\eta_2)\right),
\end{aligned}
\]
and the proof is completed.
\end{proof}
The assertion of Theorem \ref{stability1} follows if we can show that the right-hand side of (\ref{estim})
is  strictly positive. Since $\Phi$ is continuous,  we can conclude that the function
\[
\left((\zeta_1,\zeta_2,\zeta_3),(\kappa_1,\tau_1,\eta_1),(\kappa_2,\tau_2,\eta_2)\right)\mapsto \sup_{\| g \|=1} \Phi\left(g,(\zeta_1,\zeta_2,\zeta_3),(\kappa_1,\tau_1,\eta_1),(\kappa_2,\tau_2,\eta_2)\right),
\]
is lower semi-continuous, so that it attains its minimum on  the compact set \\
$\mathcal{E}\times \mathcal{P}_{[a, b]\times[c, d]\times[e,f]}  \times \mathcal{P}_{[a, b]\times[c, d]\times[e,f]} $.
Hence, to prove Theorem \ref{stability1}, it suffices to show that
\begin{equation}\label{estim_04}
\sup_{\| g \|=1} \Phi\left(g,(\zeta_1,\zeta_2,\zeta_3),(\kappa_1,\tau_1,\eta_1),(\kappa_2,\tau_2,\eta_2)\right)>0, 
\end{equation}
for all $
\left((\zeta_1,\zeta_2,\zeta_3),(\kappa_1,\tau_1,\eta_1),(\kappa_2,\tau_2,\eta_2)\right)\in \mathcal{E}\times \mathcal{P}_{[a , b]\times[c, d]\times[e,f]}  \times \mathcal{P}_{[a, b]\times[c, d]\times[e,f]}.$

 In order to prove that (\ref{estim_04}) holds true, let $\left((\zeta_1,\zeta_2,\zeta_3),(\kappa_1,\tau_1,\eta_1),(\kappa_2,\tau_2,\eta_2)\right)\in \mathcal{E}\times \mathcal{P}_{[a, b]\times[c, d]\times[e,f]}  \times \mathcal{P}_{[a, b]\times[c, d]\times[e,f]}$.
\\
We first treat the case that $(\zeta_1,\zeta_2,\zeta_3)\in \mathcal{E}_+ $. Then,  there exist  a nom-empty open subset $\mathcal D_1\Subset \Omega$ and  a constant $0<\delta<1$,  such that either
\[
\begin{aligned}
&\text{(i)}\   \zeta_1|_{\mathcal D_1}\geq \delta,  \text{ and } \zeta_2, \zeta_3\geq 0, \textrm{ or }\cr
&\text{(ii)}\   \zeta_2|_{\mathcal D_1}\geq \delta,  \text{ and } \zeta_1,\zeta_3\geq 0, \textrm{ or }\cr
&\text{(iii)}\   \zeta_3|_{\mathcal D_1}\geq \delta,  \text{ and } \zeta_1,\zeta_2\geq 0.
\end{aligned}
\]
In case (i),  we use the localized potentials results  from Theorem \ref{thm:locpot},   to obtain  an open subset $\mathcal D_2\Subset \Omega$
with $\overline{\mathcal D_1}\cap \overline{\mathcal D_2}=\emptyset$  and a  boundary load
$\tilde g\in L^2(\Gamma_N,\mathbb{R}^d)$ with 
\begin{equation}\label{estim_loc_pot}
\int_{\mathcal D_1}\Vert \nabla\cdot u^{\tilde g}_{(\kappa_1,\tau_1,\eta_1)}\Vert^2\,dx  \geq \frac{1}{\delta }
\end{equation}
 This leads to
\[
 \begin{aligned}
&\Psi\left(\tilde g,(\zeta_1,\zeta_2,\zeta_3),(\kappa_1,\tau_1,\eta_1)\right)\\
&= \int_\Omega \zeta_1\Vert\nabla^s\cdot u^{\tilde g}_{(\kappa_1,\tau_1,\eta_1)}\Vert^2\,dx
+2\int_\Omega \zeta_2\Vert\nabla^s u^{\tilde g}_{(\kappa_1,\tau_1,\eta_1)}\Vert^2_F\,dx+
\int_\Omega\zeta_3 \Vert u^{\tilde g}_{(\kappa_1,\tau_1,\eta_1)}\Vert^2\, dx \cr
& \geq \int_{\mathcal D_1} \zeta_1 \Vert\nabla\cdot u^{\tilde g}_{(\kappa_1,\tau_1,\eta_1)}\Vert^2\,dx
 \geq \delta\int_{\mathcal D_1}  \Vert\nabla\cdot u^{\tilde g}_{(\kappa_1,\tau_1,\eta_1)}\Vert^2\,dx \geq 1.
 \end{aligned}
\]
In case $(ii)$, we use the localized potentials results  from Theorem \ref{thm:locpot} to obtain  an open subset $\mathcal D_2\Subset \Omega$
with $\overline{\mathcal D_1}\cap \overline{\mathcal D_2}=\emptyset$  and a  boundary load
$\tilde g\in L^2(\Gamma_N,\mathbb{R}^d)$ with 
\[
\int_{\mathcal D_1}  \Vert\nabla^s u^{\tilde g}_{(\kappa_1,\tau_1,\eta_1)}\Vert_F^2\,dx  \geq \frac{1}{2\delta}.
\]
We get 
\[
 \begin{aligned}
&\Psi\left(\tilde g,(\zeta_1,\zeta_2,\zeta_3),(\kappa_1,\tau_1,\eta_1)\right)\\
&\geq  \int_\Omega \zeta_1\Vert\nabla\cdot u^{\tilde g}_{(\kappa_1,\tau_1,\eta_1)}\Vert^2\,dx
+2\int_\Omega \zeta_2\Vert\nabla^s u^{\tilde g}_{(\kappa_1,\tau_1, \eta_1)}\Vert^2_F\,dx+
\int_\Omega\zeta_3 \Vert u^{\tilde g}_{(\kappa_1,\tau_1,\eta_1)}\Vert^2\, dx  \cr
&\geq   2\int_{\mathcal D_1} \zeta_2 \Vert \nabla^s u^{\tilde g}_{(\kappa_1,\tau_1,\eta_1)}\Vert^2_F\, dx
 \geq 2\delta \int_{\mathcal D_1} \Vert \nabla^s u^{\tilde g}_{(\kappa_1,\tau_1,\eta_1)}\Vert^2_F\, dx  \geq 1.
 \end{aligned}
\]
In case $(iii)$, we use the localized potentials results  from Theorem \ref{thm:locpot} to obtain  an open subset $\mathcal D_2\Subset \Omega$
with $\overline{\mathcal D_1}\cap \overline{\mathcal D_2}=\emptyset$  and a  boundary load
$\tilde g\in L^2(\Gamma_N,\mathbb{R}^d)$ with 
\[
\int_{\mathcal D_1}  \Vert u^{\tilde g}_{(\kappa_1,\tau_1,\eta_1)}\Vert^2\,dx  \geq \frac{1}{\delta}.
\]
We obtain 
\[
 \begin{aligned}
&\Psi\left(\tilde g,(\zeta_1,\zeta_2,\zeta_3),(\kappa_1,\tau_1,\eta_1)\right)\\
& = \int_{\mathcal D_1} \zeta_3 \Vert u^{\tilde g}_{(\kappa_1,\tau_1,\eta_1)}\Vert^2\,dx
 \geq \delta\int_{\mathcal D_1}  \Vert u^{\tilde g}_{(\kappa_1,\tau_1,\eta_1)}\Vert^2\,dx \geq 1.
\end{aligned}
\]
In all three cases, we can find  $\tilde g\in L^2(\Gamma_N,\mathbb{R}^d)$  such that
\[
 \Psi(\tilde g,(\zeta_1,\zeta_2,\zeta_3),(\kappa_1,\tau_1,\eta_1),(\kappa_2,\tau_2,\eta_2))>0.
\]
For  $(\zeta_1,\zeta_2,\zeta_3) \in  \mathcal{E}_-$, we can analogously use a localized potentials sequence for $(\kappa_2,\tau_2,\eta_2)$, and we can find  $\tilde g\in L^2(\Gamma_N,\mathbb{R}^d)$
such that 
\[
 \Psi(\tilde g,(-\zeta_1,-\zeta_2,-\zeta_3),(\kappa_1,\tau_1,\eta_1),(\kappa_2,\tau_2,\eta_2))>0.
\]
We conclude that
\[
 \begin{aligned}
&\sup_{\| g \|=1} \Phi(g,(\zeta_1,\zeta_2,\zeta_3),(\kappa_1,\tau_1,\eta_1),(\kappa_2,\tau_2,\eta_2))
\geq  \Phi\left(\frac{\tilde g}{\Vert \tilde g\Vert},(\zeta_1,\zeta_2,\zeta_3),(\kappa_1,\tau_1,\eta_1),(\kappa_2,\tau_2,\eta_2)\right)\cr
&=\frac{1}{\Vert\tilde g\Vert^2} \Phi(\tilde g,(\zeta_1,\zeta_2,\zeta_3),(\kappa_1,\tau_1,\eta_1),(\kappa_2,\tau_2,\eta_2))>0,
\end{aligned}
\]
 which completes  the proof of  Theorem \ref{stability1}.
 \section{Conclusion}
In this study, we have tackled the inverse problem of recovering the Lamé parameters $\lambda,\mu$ and  the density  $\rho$ from the Neumann-to-Dirichlet map $\Lambda_{\lambda,\mu, \rho}$. Specifically, when the Lamé parameters $\lambda, \mu$ are  known, we have established non constructiv Lipschitz stability result for recovering the density profile $\rho$. For the scenario involving piecewise constant parameters in a given partition, constructive Lipschitz stability result is derived.
Additionally, under a definiteness assumption, we have demonstrated a Lipschitz stability result for simultaneously recovering the coefficients $\lambda, \mu$ and $\rho$. In both cases, we assume that the coefficients have known upper and lower bounds and belong to a finite-dimensional subspace.

Our proofs primarily rely on establishing a monotonicity relation between the parameters $\lambda,\mu, \rho$ and the Neumann-to-Dirichlet map combined  with the utilization of localized potentials. \\

 \noindent
{\bf Conflict of interest}:The authors declare no conflict of interests.\\
{\bf Data availability statement}:  Not applicable. \\
{ \bf Funding statement}: This work is supported by the Ministry of Higher Education and Scientific Research (Tunisia).

\bibliography{biblio}

\begin{thebibliography}{10}
\providecommand{\doi}[1]{\url{https://doi.org/#1}}
\bibcommenthead

\bibitem[\protect\citeauthoryear{Villa et~al.}{2021}]{villa2021mechanical}
Villa C, Chaplain MA, Gerisch A, Lorenzi T.
\newblock Mechanical models of pattern and form in biological tissues: The role
  of stress--strain constitutive equations.
\newblock Bulletin of Mathematical Biology. 2021;83(7):80.

\bibitem[\protect\citeauthoryear{Aki and Richards}{1980}]{aki1980quantitative}
Aki K, Richards PG.
\newblock Quantitative Seismology: Theory and Methods; 1980. Available from:
  \url{https://api.semanticscholar.org/CorpusID:58794764}.

\bibitem[\protect\citeauthoryear{Gennisson
  et~al.}{2013}]{gennisson2013ultrasound}
Gennisson JL, Deffieux T, Fink M, Tanter M.
\newblock Ultrasound elastography: principles and techniques.
\newblock Diagnostic and interventional imaging. 2013;94(5):487--495.

\bibitem[\protect\citeauthoryear{Parker et~al.}{2010}]{parker2010imaging}
Parker KJ, Doyley MM, Rubens DJ.
\newblock Imaging the elastic properties of tissue: the 20 year perspective.
\newblock Physics in medicine $\&$ biology. 2010;56(1):R1.

\bibitem[\protect\citeauthoryear{Doyley}{2012}]{doyley2012model}
Doyley MM.
\newblock Model-based elastography: a survey of approaches to the inverse
  elasticity problem.
\newblock Physics in Medicine $\&$ Biology. 2012;57(3):R35.

\bibitem[\protect\citeauthoryear{Yuan et~al.}{2014}]{yuan2014application}
Yuan H, Guzina BB, Sinkus R.
\newblock Application of topological sensitivity toward tissue elasticity
  imaging using magnetic resonance data.
\newblock Journal of Engineering Mechanics. 2014;140(3):443--453.

\bibitem[\protect\citeauthoryear{Alessandrini}{1988}]{alessandrini1988stable}
Alessandrini G.
\newblock Stable determination of conductivity by boundary measurements.
\newblock Applicable Analysis. 1988;27(1-3):153--172.

\bibitem[\protect\citeauthoryear{Harrach}{2019}]{harrach2019uniqueness}
Harrach B.
\newblock Uniqueness and {Lipschitz} stability in electrical impedance
  tomography with finitely many electrodes.
\newblock Inverse problems. 2019;35(2):024005.

\bibitem[\protect\citeauthoryear{Harrach and Meftahi}{2019}]{harrach2019global}
Harrach B, Meftahi H.
\newblock Global uniqueness and {Lipschitz} stability for the inverse Robin
  transmission problem.
\newblock SIAM Journal on Applied Mathematics. 2019;79(2):525--550.

\bibitem[\protect\citeauthoryear{Ikehata}{1990}]{ikehata1990inversion}
Ikehata M.
\newblock Inversion formulas for the linearized problem for an inverse boundary
  value problem in elastic prospection.
\newblock SIAM Journal on Applied Mathematics. 1990;50(6):1635--1644.

\bibitem[\protect\citeauthoryear{Imanuvilov and
  Yamamoto}{2015}]{imanuvilov2015global}
Imanuvilov OY, Yamamoto M.
\newblock Global uniqueness in inverse boundary value problems for the
  {Navier--Stokes} equations and {Lam{\'e}} system in two dimensions.
\newblock Inverse Problems. 2015;31(3):035004.

\bibitem[\protect\citeauthoryear{Nakamura and
  Uhlmann}{1994}]{nakamura1994global}
Nakamura G, Uhlmann G.
\newblock Global uniqueness for an inverse boundary problem arising in
  elasticity.
\newblock Inventiones mathematicae. 1994;118(1):457--474.

\bibitem[\protect\citeauthoryear{Elena~Beretta}{2014}]{Bretta}
Elena~Beretta SV Elisa~Francini.
\newblock Uniqueness and {Lipschitz} stability for the identification of
  {Lam{\'e}} parameters from boundary measurements.
\newblock Inverse Problems \& Imaging. 2014;8(3):611--644.

\bibitem[\protect\citeauthoryear{Akamatsu
  et~al.}{1991}]{akamatsu1991identification}
Akamatsu M, Nakamura G, Steinberg S.
\newblock Identification of {Lam{\'e}} coefficients from boundary observations.
\newblock Inverse Problems. 1991;7(3):335.

\bibitem[\protect\citeauthoryear{Nakamura and
  Uhlmann}{1995}]{nakamura1995inverse}
Nakamura G, Uhlmann G.
\newblock Inverse problems at the boundary for an elastic medium.
\newblock SIAM journal on mathematical analysis. 1995;26(2):263--279.

\bibitem[\protect\citeauthoryear{Eskin and Ralston}{2002}]{eskin2002inverse}
Eskin G, Ralston J.
\newblock On the inverse boundary value problem for linear isotropic
  elasticity.
\newblock Inverse Problems. 2002;18(3):907.

\bibitem[\protect\citeauthoryear{Nakamura and
  Uhlmann}{1993}]{nakamura1993identification}
Nakamura G, Uhlmann G.
\newblock Identification of {Lam{\'e}} parameters by boundary measurements.
\newblock American Journal of Mathematics. 1993;p. 1161--1187.

\bibitem[\protect\citeauthoryear{Nakamura and
  Uhlmann}{2003}]{nakamura2003global}
Nakamura G, Uhlmann G.
\newblock Global uniqueness for an inverse boundary value problem arising in
  elasticity.
\newblock Inventiones mathematicae. 2003;152(1):205--207.

\bibitem[\protect\citeauthoryear{Beretta and
  Francini}{2011}]{beretta2011lipschitz}
Beretta E, Francini E.
\newblock Lipschitz stability for the electrical impedance tomography problem:
  the complex case.
\newblock Communications in Partial Differential Equations.
  2011;36(10):1723--1749.

\bibitem[\protect\citeauthoryear{Alessandrini and
  Vessella}{2005}]{alessandrini2005lipschitz}
Alessandrini G, Vessella S.
\newblock Lipschitz stability for the inverse conductivity problem.
\newblock Advances in Applied Mathematics. 2005;35(2):207--241.

\bibitem[\protect\citeauthoryear{Gaburro and
  Sincich}{2015}]{gaburro2015lipschitz}
Gaburro R, Sincich E.
\newblock Lipschitz stability for the inverse conductivity problem for a
  conformal class of anisotropic conductivities.
\newblock Inverse Problems. 2015;31(1):015008.

\bibitem[\protect\citeauthoryear{Alessandrini
  et~al.}{2018}]{alessandrini2018lipschitz}
Alessandrini G, de~Hoop MV, Gaburro R, Sincich E.
\newblock Lipschitz stability for a piecewise linear {Schr{\"o}dinger}
  potential from local Cauchy data.
\newblock Asymptotic Analysis. 2018;108(3):115--149.

\bibitem[\protect\citeauthoryear{Beretta et~al.}{2013}]{beretta2013lipschitz}
Beretta E, De~Hoop MV, Qiu L.
\newblock Lipschitz stability of an inverse boundary value problem for a
  {Schr{\"o}dinger}-type equation.
\newblock SIAM Journal on Mathematical Analysis. 2013;45(2):679--699.

\bibitem[\protect\citeauthoryear{Alberti and
  Santacesaria}{2019}]{alberti2019calderon}
Alberti GS, Santacesaria M.
\newblock Calder{\'o}n’s inverse problem with a finite number of
  measurements.
\newblock In: Forum of Mathematics, Sigma. vol.~7. Cambridge University Press;
  2019. .

\bibitem[\protect\citeauthoryear{Ammari et~al.}{2021}]{ammari2021direct}
Ammari H, Bretin E, Millien P, Seppecher L.
\newblock A direct linear inversion for discontinuous elastic parameters
  recovery from internal displacement information only.
\newblock Numerische Mathematik. 2021;147:189--226.

\bibitem[\protect\citeauthoryear{Ammari et~al.}{2015}]{ammari2015mathematical}
Ammari H, Bretin E, Garnier J, Kang H, Lee H, Wahab A.
\newblock Mathematical methods in elasticity imaging.
\newblock Princeton University Press; 2015.

\bibitem[\protect\citeauthoryear{Ammari et~al.}{2008}]{ammari2008method}
Ammari H, Garapon P, Kang H, Lee H.
\newblock A method of biological tissues elasticity reconstruction using
  magnetic resonance elastography measurements.
\newblock Quarterly of Applied Mathematics. 2008;66(1):139--175.

\bibitem[\protect\citeauthoryear{Arridge and
  Lionheart}{1998}]{arridge1998nonuniqueness}
Arridge SR, Lionheart WR.
\newblock Nonuniqueness in diffusion-based optical tomography.
\newblock Optics letters. 1998;23(11):882--884.

\bibitem[\protect\citeauthoryear{Harrach}{2012}]{harrach2012simultaneous}
Harrach B.
\newblock Simultaneous determination of the diffusion and absorption
  coefficient from boundary data.
\newblock Inverse Problems $\&$ Imaging. 2012;6(4):663.

\bibitem[\protect\citeauthoryear{Meftahi}{2021}]{meftahi2020uniqueness}
Meftahi H.
\newblock Uniqueness, {Lipschitz} Stability, and Reconstruction for the Inverse
  Optical Tomography Problem.
\newblock SIAM Journal on Mathematical Analysis. 2021;53(6):6326--6354.

\bibitem[\protect\citeauthoryear{Mandache}{2001}]{mandache2001exponential}
Mandache N.
\newblock Exponential instability in an inverse problem for the
  {Schr{\"o}dinger} equation.
\newblock Inverse Problems. 2001;17(5):1435.

\bibitem[\protect\citeauthoryear{Meftahi and
  Potschka}{2023}]{meftahi2023elastic}
Meftahi H, Potschka A.
\newblock Elastic shear modulus and density profiles inversion: Lipschitz
  stability results.
\newblock Applicable Analysis. 2023;p. 1--16.

\bibitem[\protect\citeauthoryear{Eberle et~al.}{2021}]{eberle2021lipschitz}
Eberle S, Harrach B, Meftahi H, Rezgui T.
\newblock Lipschitz stability estimate and reconstruction of Lam{\'e}
  parameters in linear elasticity.
\newblock Inverse Problems in Science and Engineering. 2021;29(3):396--417.

\bibitem[\protect\citeauthoryear{Harrach et~al.}{2018}]{harrach2018localizing}
Harrach B, Lin YH, Liu H.
\newblock On localizing and concentrating electromagnetic fields.
\newblock SIAM Journal on Applied Mathematics. 2018;78(5):2558--2574.

\bibitem[\protect\citeauthoryear{Harrach and Seo}{2010}]{harrach2010exact}
Harrach B, Seo JK.
\newblock Exact shape-reconstruction by one-step linearization in electrical
  impedance tomography.
\newblock SIAM Journal on Mathematical Analysis. 2010;42(4):1505--1518.

\bibitem[\protect\citeauthoryear{Harrach and Ullrich}{2017}]{harrach2017local}
Harrach B, Ullrich M.
\newblock Local uniqueness for an inverse boundary value problem with partial
  data.
\newblock Proceedings of the American Mathematical Society.
  2017;145(3):1087--1095.

\bibitem[\protect\citeauthoryear{Eberle et~al.}{2020}]{eberle2020lipschitz}
Eberle S, Harrach B, Meftahi H, Rezgui T.
\newblock Lipschitz stability estimate and reconstruction of {Lam{\'e}}
  parameters in linear elasticity.
\newblock Inverse Problems in Science and Engineering. 2020;p. 1--22.

\bibitem[\protect\citeauthoryear{Beretta et~al.}{2017}]{beretta2017uniqueness}
Beretta E, de~Hoop MV, Francini E, Vessella S, Zhai J.
\newblock Uniqueness and Lipschitz stability of an inverse boundary value
  problem for time-harmonic elastic waves.
\newblock Inverse Problems. 2017;33(3):035013.

\bibitem[\protect\citeauthoryear{De~Hoop et~al.}{2012}]{de2012local}
De~Hoop MV, Qiu L, Scherzer O.
\newblock Local analysis of inverse problems: H{\"o}lder stability and
  iterative reconstruction.
\newblock Inverse Problems. 2012;28(4):045001.

\bibitem[\protect\citeauthoryear{Maarten et~al.}{2015}]{maarten2015analysis}
Maarten V, Qiu L, Scherzer O.
\newblock An analysis of a multi-level projected steepest descent iteration for
  nonlinear inverse problems in Banach spaces subject to stability constraints.
\newblock Numerische Mathematik. 2015;129(1):127--148.

\bibitem[\protect\citeauthoryear{Alberti and
  Santacesaria}{2019}]{alberti2019infinite}
Alberti GS, Santacesaria M.
\newblock Infinite-dimensional inverse problems with finite measurements.
\newblock arXiv preprint arXiv:190610028. 2019;.

\bibitem[\protect\citeauthoryear{Uhlmann et~al.}{2011}]{lin2010quantitative}
Uhlmann G, Lin CL, Nakamura G, Wang JN.
\newblock Quantitative strong unique continuation for the {Lam{\'e}} system
  with less regular coefficients.
\newblock Methods and Applications of Analysis. 2011;18(1):085--092.

\bibitem[\protect\citeauthoryear{Gebauer}{2008}]{gebauer2008localized}
Gebauer B.
\newblock Localized potentials in electrical impedance tomography.
\newblock Inverse Problems $\&$ Imaging. 2008;2(2):251--269.

\end{thebibliography}

\end{document}